\newtheorem{theorem}{Theorem}
\newtheorem{definition}{Definition}
\newtheorem{lemma}{Lemma}
\newtheorem{problem}{Problem}
\newcommand{\bbb}{\mathbb}
\def\eq#1{\begin{equation}#1\end{equation}}
\newcommand{\R}{{\rm I\!R}}
\def\rep#1{(\ref{#1})}
\def\scr#1{{\cal #1}}
\newcommand{\1}{\mathbf{1}}
\newcommand{\0}{\mathbf{0}}
\newcommand{\D}{\mathcal{D}}
\title{Multi-Competitive Viruses over Static and Time--Varying Networks}
\author{Philip E. Par\'{e}, Ji Liu,
Carolyn L. Beck,
Angelia Nedi\'{c}, and Tamer Ba\c{s}ar*\thanks{
* Philip E. Par\'{e}, Ji Liu, Carolyn L. Beck,  and Tamer Ba\c{s}ar are with the Coordinated Science Laboratory at the University of Illinois at Urbana-Champaign and can be reached at {\tt philip.e.pare@gmail.com}, {\tt jiliu@illinois.edu}, {\tt beck3@illinois.edu}, and {\tt basar1@illinois.edu}, respectively. Angelia Nedi\'{c} is with the School of Electrical, Computer and Energy Engineering at Arizona State University and can be reached at {\tt angelia.nedich@asu.edu}.  This material is based on research partially sponsored by the National Science Foundation, grants ECCS 15-09302, and CNS 15-44953.  All material in this paper represents the position of the authors and not necessarily that of NSF.
}}
\begin{document}
\maketitle

\begin{abstract}
Epidemic processes are used commonly for modeling and analysis of biological networks,  computer networks, and human contact networks. The idea of competing viruses has been explored recently, motivated by the spread of different ideas along different social networks. Previous studies of competitive viruses have focused only on two viruses and  on static graph structures. 
In this paper, we consider multiple competing viruses over static and dynamic
graph structures, and investigate the eradication and propagation of diseases in these systems. 
Stability analysis for the class of models we consider is performed and an antidote control technique is proposed. 
\end{abstract}

\section{Introduction}

Spread dynamics have been studied for hundreds of years. Bernoulli developed one of the first known models inspired by the smallpox virus \cite{bernoulli1760essai}.
In this paper we focus exclusively on   
susceptible-infected-susceptible (SIS) models, which have been developed for both continuous  \cite{kermack1932contributions,fall2007epidemiological,van2009virus,ahn2013global} and discrete  time domains \cite{ahn2013global,wang2003epidemic,peng2010epidemic}.  SIS models consist of a number of agents that are either infected or healthy (susceptible), which may cycle (aperiodically)  between these two states. 
The infection rate combined with the connectivity of the $i$th agent with infected neighbors $j$ (denoted by $\beta_{ij}$) positively affects the probability of being infected, while the healing rate $\delta_i$ negatively affects the infection probability. This is depicted in Figure \ref{fig:sis}.

The idea of two competing SIS viruses, namely the bi-virus model, has been recently pursued in \cite{prakash2012winner,wei2013competing,sahneh2014competitive,santos2015bivirus,liu2016onthe,arxiv}. 
The main motivation for such systems is that of competing ideas spreading on different social networks. However these models can have broader applications to political stances, adaptation of competing products, competing practices in farming, etc. and can be generalized to more than two viruses. Consider, for example, the case of three competing viruses; then each state has four possible states: susceptible, infected with virus $1$, $2$, or $3$. 
The idea of information diffusion on two layered networks has also been explored for a susceptible-infected-recovered (SIR) model in \cite{yagan2013conjoining}.  
In \cite{xu2012multi}, a different multi-virus model is considered. 


Further, all previous work on competing viruses has focused on viruses over static graph structures. There are recent results for the single virus model over time--varying networks \cite{prakash2010virus,bokharaie2010spread,rami2014switch,pare2015stability,pare2017epidemic}. Some of  the ideas from \cite{pare2015stability,pare2017epidemic} will be employed in this paper and applied to a more general model.

Various control techniques have been applied to SIS virus systems \cite{wan2007network,wan2008designing,vijayshankar2012cost,arxiv}. These techniques assume the healing rate is a control variable. 
In \cite{arxiv}, it is shown that there exists no distributed linear feedback control that can stabilize the system, and in fact, 
will destabilize the system. 
Alternative approaches focus on reducing the maximum eigenvalue of the linearized system using the healing rate and/or the infection rate. 
In \cite{wan2007network,wan2008designing}, distributed control techniques for setting healing rate and quarantine protocols are proposed and implemented on a severe acute respiratory syndrome (SARS) simulation model. 
In \cite{vijayshankar2012cost}, a bound is provided for the cost of fairness of mitigating the spread of disease, that is, the difference between the optimal solution and the fair or homogeneous solution, for several classes of graphs. In \cite{PreciadoTCNS14}, geometric programming ideas  are used to control single SIS virus systems and the authors present a polynomial time algorithm illustrated on  an air transportation network. 
In \cite{watkins2016optimal}, similar ideas to \cite{PreciadoTCNS14} are applied to the bi-virus model.

In this paper we present a generalization of the bi-virus model to an arbitrary number, $m$, of  competing viruses. We provide conditions for stability of the disease-free equilibrium (DFE) for static as well as time--varying graph structures. We also provide sufficient conditions for stability of the non-disease free equilibrium (NDFE). We provide two control techniques based on minimizing the maximum eigenvalue of the linearized system, appealing to some of the theorems presented herein. 
These control techniques, which are different from other approaches in the literature, allow every agent to have a base healing rate and an  additive control term. 

The paper is organized as follows: we first introduce, in Section \ref{sec:model}, the SIS model and the competing virus model 
for $m$ viruses. In Sections \ref{sec:analysis} and \ref{sec:ndfe} we analyze the model, providing conditions for stability of the  DFE and the NDFE, and in Section \ref{sec:control} we provide an antidote control formulation. In Section \ref{sec:sim}, we present a set of illuminating simulations of various competing virus models over time--varying networks, and we conclude with some discussion in Section \ref{sec:con}. 

\begin{figure}[!tbp]
    \centering
    \subfloat[SIS Model\label{fig:sis}]{%
      \begin{overpic}[width=0.24\textwidth]{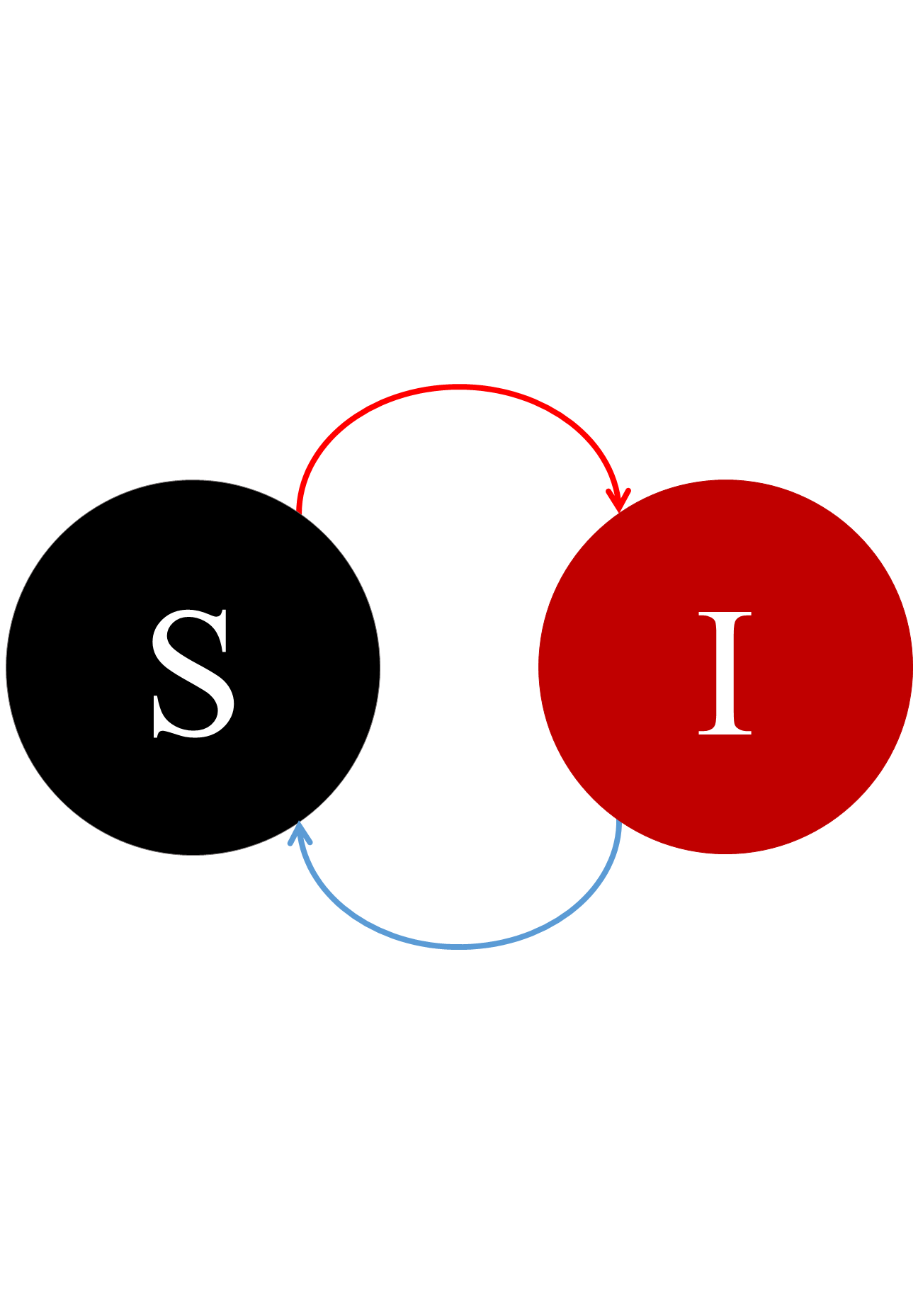}
        \put(46,51){{\parbox{0.05\linewidth}{%
     \begin{equation*}
          \delta_i
     \end{equation*}}}}
     \put(40,98.5){{\parbox{0.05\linewidth}{%
     $\sum \beta_{ij}  p_j$}}}
      \end{overpic}
    }%
    \subfloat[Model Three Competing Viruses\label{fig:sisis}]{%
      \begin{overpic}[width=0.309\textwidth]{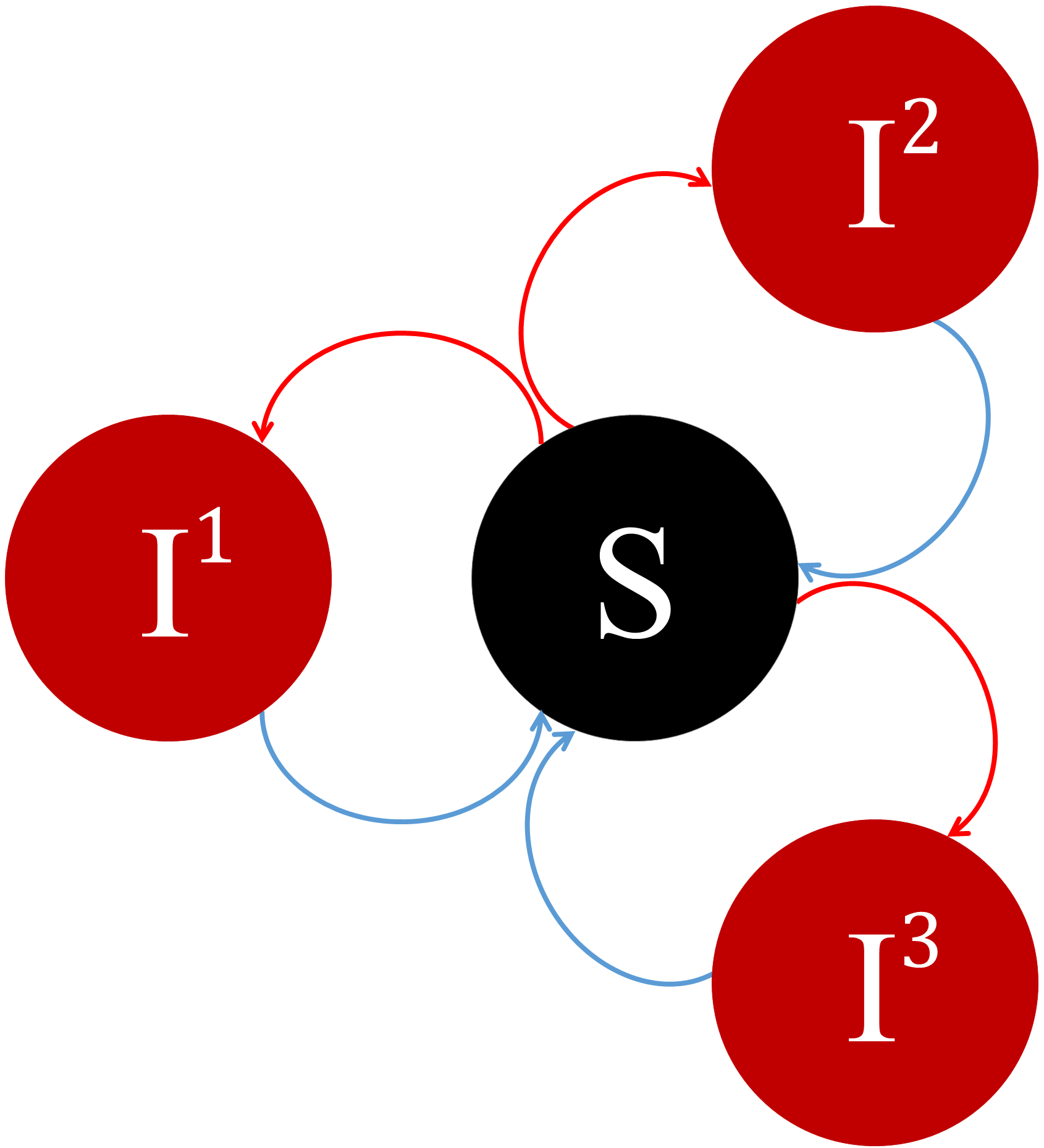}
        \put(46,51.5){{\parbox{0.05\linewidth}{%
     \begin{equation*}
          \delta^1_i
     \end{equation*}}}}
     \put(39.8,98.5){{\parbox{0.05\linewidth}{\small
     $\sum \beta^1_{ij}  p^1_j$}}}\normalsize
     \put(75,110){{\parbox{0.05\linewidth}{\small
     $\sum \beta^2_{ij}  p^2_j$}}}\normalsize
     \put(105,56){{\parbox{0.05\linewidth}{\small
     $\sum \beta^3_{ij}  p^3_j$}}}\normalsize
     \put(117,92){{\parbox{0.05\linewidth}{%
     \begin{equation*}
          \delta^2_i
     \end{equation*}}}}
     \put(70,36){{\parbox{0.05\linewidth}{%
     \begin{equation*}
          \delta^3_i
     \end{equation*}}}}
      \end{overpic}
    }
    \caption{The $i$th agent has the probability of being in either a susceptible or an infected state (the summations are over $j$). 
    }
\label{fig:models}
\end{figure}

\subsection{Notation}

For any positive integer $n$, we use $[n]$ to denote the set $\{1,2,\ldots,n\}$.
We view vectors as column vectors. We use $x^T$ to denote the transpose of a vector $x$. 
The $i$th entry of a vector $x$ will be denoted by $x_i$.
The $ij$th entry of a matrix $A$ will be denoted by $a_{ij}$ and, also, by $[A]_{ij}$ when convenient.
We use $\0$ and $\1$ to denote the vectors whose entries are all equal to $0$ and $1$, respectively,
and $I$ to denote the identity matrix,
while the dimensions of the vectors and matrices are to be understood from the context.
For any vector $x\in\R^n$, we use ${\rm diag}(x)$ to denote the $n\times n$ diagonal matrix
whose $i$th diagonal entry equals $x_i$.
For any two sets $\scr{A}$ and $\scr{B}$,
we use $\scr{A}\setminus \scr{B}$ to denote the set of elements in $\scr{A}$ but not in $\scr{B}$.

For any two real vectors $a,b\in\R^n$, we write $a\geq b$ if
$a_{i}\geq b_{i}$ for all $i\in[n]$,
$a>b$ if $a\geq b$ and $a\neq b$, and $a \gg b$ if $a_{i}> b_{i}$ for all $i\in[n]$.
For a real square matrix $M$, we use 
$s(M)$ to denote the largest real part among its eigenvalues, i.e., 
$s(M) = \max \left\{{\rm Re}(\lambda)\ : \ \lambda\in\sigma(M)\right\},$
where ${\rm Re}(\cdot)$ is the real part of the argument and $\sigma(M)$ denotes the spectrum of $M$. For a symmetric matrix $M$, we use $\lambda_1(M)$ to denote its largest eigenvalue.


\section{The Model} \label{sec:model}
The generic SIS model, a generalization of the models introduced in \cite{van2009virus}, is 
\begin{equation}\label{eq:vani}
\dot{p}_i(t) = (1 - p_i(t))\sum_{j=1}^n \beta_{ij}  p_j(t) - \delta_i p_i(t),
\end{equation}
where $p_i$ is the probability that agent $i$ is infected, the $\beta_{ij}$'s are (possibly asymmetric) 
infection rates incorporating the nearest-neighbor graph structure, and $\delta_i$ is the healing rate. Neighbor relationships among the $n$ agents are described
by a directed graph $\bbb{G}$ on $n$ vertices with an arc from vertex $j$
to vertex $i$ whenever agent $i$ can be infected by agent $j$.
The agents can also be thought of as groups of people and $p_i$'s as the percentages of the groups that are infected, and therefore the neighbor graph $\bbb{G}$ can have self-arcs at all $n$ vertices.  
Hence, $\beta_{ij}$ equals zero if there is not an edge in $\bbb{G}$ from node $j$ to node $i$. The model in \eqref{eq:vani} is more general because the underlying graph $\bbb{G}$ can be directed and the weights given by $\beta_{ij}$ can be any non-negative number.
The representation in \eqref{eq:vani} can be put into matrix form:
\begin{equation}\label{eq:van}
\dot{p}(t) = (B- P(t)B - D)p(t),
\end{equation}
where $p$ is the vector of the $p_i$'s, $B$ is the matrix of the $\beta_{ij}$'s, $P = \text{diag}(p)$, and $D= \text{diag}(\delta_1,\dots,\delta_n)$. In the analysis that follows, as stated above, $B$ is not assumed to be symmetric unless explicitly stated so. 

This model has been extended to have two viruses, providing a generalization of the model introduced in  \cite{sahneh2014competitive}, 
\begin{equation}\label{eq:2sis}
\begin{split}
\dot{p}^{1}_i(t) &= (1 - p^{1}_i(t) - p^{2}_i(t))\sum_{j=1}^n \beta^{1}_{ij}  p^{1}_j(t) - \delta^{1}_i p^{1}_i(t), \\
\dot{p}^{2}_i(t) &= (1 - p^{1}_i(t) - p^{2}_i(t))\sum_{j=1}^n \beta^{2}_{ij}  p^{2}_j(t) - \delta^{2}_i p^{2}_i(t),
\end{split}
\end{equation}
where ${p}^{1}_i(t)$ and ${p}^{2}_i(t)$ are the probabilities that agent $i$ has virus $1$ and $2$ respectively, and each virus has its own infection rates and healing rates. Each virus spreads over a (possibly different) spanning subgraph of $\bbb{G}$, where their union is the neighbor graph $\bbb G$.
It will be assumed that both of the two subgraphs are strongly connected
and, thus, so is $\bbb{G}$.\footnote{
A directed graph is {\em strongly connected} if
for any two distinct vertices $i$ and $j$, there is a directed path from $i$ to $j$. 
}

We need not restrict ourselves to two viruses, however. A direct generalization leads to the following multi-virus model:
\begin{equation}\label{eq:sysi}
\dot{p}^{k}_i(t) = (1 - p^{1}_i(t) - \dots - p^{m}_i(t))\sum_{j=1}^n \beta^{k}_{ij}  p^{k}_j(t) - \delta^{k}_i p^{k}_i(t), 
\end{equation}
for all $k \in  [m]$. 
This representation can be written in matrix form as:
\begin{equation}\label{sys}
\dot{p}^{k}(t) = ((I - P^{1}(t) - \dots - P^{m}(t))B^{k} - D) p^{k}(t), 
\end{equation}
where the matrices are the same as in \eqref{eq:van}, but now they are dependent on which virus they correspond to. Since the subgraph for each virus $k$ is strongly connected, it follows  that $B^k$ is irreducible, meaning that it cannot be permuted into block triangular matrix form. The assumption that $B^k$ is bounded means $\forall i,j, \ \beta^{k}_{ij}<\infty$.

The set
\begin{equation}\label{D}
    \D     =\{(p^1,\dots , p^m) \; | \; p^k\ge \0, \ k\in [m], \; \sum_{k=1}^m p^k\le \1\}
\end{equation}
is invariant with respect to the system defined by \rep{sys}. If $p^{k}_i$ 
denotes the probability of agent $i$ being infected by virus $k$
and $1-\sum_{k=1}^m p_i^k$ denotes the probability of agent $i$ being healthy, it is natural to assume that their initial values are in $[0,1]$,
since otherwise the values will lack any physical meaning for the epidemic model considered herein. Similarly, if the states were representative of the density of infected members of a sub-population, they would also be bounded between zero and one.
\begin{lemma}
Suppose that for all $i\in[n],k\in[m]$, we have $\delta^{k}_i\ge0$, and the matrices $B^{k}$ are non-negative. 
If for all $i\in[n],k\in[m]$, we have $p^{k}_i(0),(1-p^{1}_i(0)-\cdots -p^{m}_i(0))\in[0,1]$, then  $p^{k}_i(t),p^{1}_i(t)+\cdots +p^{m}_i(t)\in[0,1]$ for all $i\in[n],k\in[m]$ and $t\ge 0$.
\label{box}
\end{lemma}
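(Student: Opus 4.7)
The plan is to prove positive invariance of the polytope $\mathcal{D}$ by checking that the vector field defined by \rep{sys} points into (or tangentially along) $\mathcal{D}$ on every boundary face, and then invoking the Nagumo boundary-tangency criterion for closed convex sets. First I would observe that the hypothesis $p^{k}_i(0),\,1-\sum_\ell p^{\ell}_i(0)\in[0,1]$ is just the statement $(p^1(0),\dots,p^m(0))\in\mathcal{D}$, so the conclusion is exactly that $\mathcal{D}$ is positively invariant under the flow.

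Next I would examine the two families of boundary faces separately. For a point in $\mathcal{D}$ with $p^{k}_i=0$ for some fixed $(i,k)$, equation \rep{eq:sysi} gives
\[
\dot p^{k}_i \;=\; \Bigl(1-\sum_{\ell=1}^m p^{\ell}_i\Bigr)\sum_{j=1}^n \beta^{k}_{ij}\,p^{k}_j - \delta^{k}_i\cdot 0 \;\ge\; 0,
\]
since all three factors on the right are nonnegative on $\mathcal{D}$ (the bracket because $\sum_\ell p^\ell_i\le 1$, the weights because $B^k$ is nonnegative, and $p^k_j\ge 0$ from the other face constraints). Hence the trajectory cannot leave $\mathcal{D}$ through the face $\{p^{k}_i=0\}$. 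For the remaining faces, let $s_i(t):=1-\sum_{k=1}^m p^{k}_i(t)$. Summing \rep{eq:sysi} over $k$ yields
\[
-\dot s_i(t) \;=\; \sum_{k=1}^m \dot p^{k}_i(t) \;=\; s_i(t)\sum_{k=1}^m\sum_{j=1}^n \beta^{k}_{ij}\,p^{k}_j(t) \;-\;\sum_{k=1}^m \delta^{k}_i p^{k}_i(t),
\]
so at any point where $s_i=0$ one has $\dot s_i = \sum_k \delta^{k}_i p^{k}_i \ge 0$; thus the face $\{s_i=0\}$ also cannot be crossed outward.

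To make the boundary-tangency argument rigorous I would either cite Nagumo's theorem directly (the polynomial right-hand side of \rep{sys} is locally Lipschitz, and $\mathcal{D}$ is closed and convex, so verifying that $f(x)$ lies in the tangent cone of $\mathcal{D}$ at every $x\in\partial\mathcal{D}$ suffices) or, if a self-contained argument is preferred, apply a small-perturbation/comparison argument: for each $\varepsilon>0$ consider the perturbed system with $\delta^{k}_i$ replaced by $\delta^{k}_i+\varepsilon$ and a small inward shift of the initial condition, show by the strict inequalities above that the perturbed trajectory remains in the interior of $\mathcal{D}$, and pass to the limit $\varepsilon\to 0$ using continuous dependence on parameters and initial data. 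Either route yields $p^k_i(t)\ge 0$ and $\sum_k p^k_i(t)\le 1$ for all $t\ge 0$, which is exactly the claim.

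The main obstacle is not conceptual but bookkeeping: one must verify the inward-pointing condition on lower-dimensional faces (e.g.\ simultaneous intersections $\{p^{k}_i=0\}\cap\{s_i=0\}$) and confirm that the tangent-cone inclusion holds there as well. At such a point both bracket factors vanish, so $\dot p^{k}_i=0$ and $\dot s_i=\sum_\ell \delta^{\ell}_i p^{\ell}_i\ge 0$, showing the vector field is consistent with all active constraints simultaneously; this rules out any pathological exit through a corner and completes the invariance argument.
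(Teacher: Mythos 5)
Your proposal is correct and follows essentially the same route as the paper: both arguments check that the vector field of \rep{eq:sysi} points inward on each boundary face of $\mathcal{D}$ (the nonnegativity faces $\{p^k_i=0\}$ and the faces where $\sum_k p^k_i=1$, equivalently $p^k_i=1$) and conclude positive invariance. The paper states these boundary sign conditions and asserts the invariance directly, whereas you supply the additional rigor (Nagumo's tangent-cone criterion, the lower-dimensional corner cases, and a perturbation fallback), which only strengthens the same argument.
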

\begin{proof}
Suppose that at some time $\tau$, $p^{1}_i(\tau)+\cdots +p^{m}_i(\tau)\in[0,1]$ and $p^{k}_i(\tau)\in[0,1]$ for all $i\in[n],k\in[m]$.
Consider an index $i\in[n]$.
If $p^{k}_i(\tau)=0$, then from \rep{eq:sysi} and the assumption that the matrices $B^{k}$ are non-negative, $\dot p^{k}_i(\tau)\ge 0$.
The same holds for $p^{1}_i(\tau)+\cdots +p^{m}_i(\tau)$.
If $p^{k}_i(\tau)=1$, then from \rep{eq:sysi} and the assumption that the matrices $B^{k}$ are non-negative, $\dot p^{1}_i(\tau)\le 0$.
The same holds for $p^{1}_i(\tau)+\cdots +p^{m}_i(\tau)$.
It follows that $p^{k}_i(t),p^{1}_i(t)+\cdots +p^{m}_i(t)\in[0,1]$ for all $i\in[n],k\in[m]$ and $t\ge \tau$.

Since, by assumption, $p^{k}_i(0),(1-p^{1}_i(0)-\cdots -p^{m}_i(0))\in[0,1]$ for all $i\in[n],k\in[m]$,
it follows that $p^{k}_i(t),p^{1}_i(t)+\cdots +p^{m}_i(t)\in[0,1]$ for all $i\in[n],k\in[m]$ and $t\ge 0$.
\end{proof}
\noindent For the rest of the paper we assume $p^{k}_i(0),(1-\sum_j^mp^{j}_i(0))\in[0,1]$ for all $i\in[n],k\in[m]$.

It has been shown that there are disease-free equilibrium and non-disease free equilibria for the single virus system  \cite{VanMieghem2014pstar,khanafer2014stability,pare2015stability,pare2017epidemic}, as well as for  the two-virus system \cite{arxiv}; the same applies to multi-virus systems as well. However, in this case  the scenario becomes slightly more complicated because all viruses can reach the DFE, or a NDFE, or there may be some viruses at a DFE and some at a NDFE. We will explore several conditions for convergence to these different equilibria.

\section{Stability Analysis of the DFE} \label{sec:analysis}

First, we explore stability of the DFE for both the static and dynamic graph cases.

\subsection{Static Graph Structure}

We first give conditions under which the DFE is asymptotically stable. 
\begin{theorem}
Suppose that for all $i\in[n],k\in[m]$, we have $\delta^{k}_i\ge0$ and the matrices $B^{k}$ are non-negative and irreducible.
If $s(B^{k}-D^{k})\leq 0$ for all $k\in[m]$, then
the healthy state is the unique equilibrium of \rep{sys}, which is
asymptotically stable with  domain of attraction $\D$, as defined in \rep{D}.
\label{0global}\end{theorem}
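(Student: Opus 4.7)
The plan is to combine a linear Lyapunov function built from Perron--Frobenius eigenvectors with LaSalle's invariance principle on the compact, positively invariant set $\D$. Since each $B^{k}$ is non-negative and irreducible, $B^{k}-D^{k}$ is an irreducible Metzler matrix (subtracting the non-negative diagonal $D^{k}$ does not alter the off-diagonal sign pattern), so the Perron--Frobenius theorem for irreducible Metzler matrices guarantees that $s_{k}:=s(B^{k}-D^{k})$ is a simple eigenvalue with a strictly positive left eigenvector $u^{k}\gg\0$, i.e.\ $(u^{k})^{T}(B^{k}-D^{k})=s_{k}(u^{k})^{T}$, and by hypothesis $s_{k}\le 0$ for every $k$.

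I would then take $V(p)=\sum_{k=1}^{m}(u^{k})^{T}p^{k}$ as the Lyapunov candidate; it is non-negative on $\D$ and vanishes only at $p=\0$. Differentiating along \rep{sys} and using the left-eigenvector identity yields
\[
\dot V \;=\; \sum_{k=1}^{m}\Big[\, s_{k}(u^{k})^{T}p^{k} \;-\; (u^{k})^{T}\Big(\sum_{j=1}^{m}P^{j}\Big)B^{k}p^{k}\,\Big],
\]
which is manifestly $\le 0$ on $\D$: the first summand is non-positive since $s_{k}\le 0$ and $(u^{k})^{T}p^{k}\ge 0$, and the second is non-negative because $u^{k}\gg\0$, the $P^{j}$ are non-negative diagonals, and $B^{k}p^{k}\ge\0$. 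Since $\D$ is compact and positively invariant by Lemma~\ref{box}, LaSalle's invariance principle applies, and trajectories converge to the largest invariant set $M$ inside $E=\{p\in\D:\dot V(p)=0\}$.

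The crux of the proof is showing $M=\{\0\}$. The condition $\dot V=0$ decouples into (a) $s_{k}(u^{k})^{T}p^{k}=0$ for every $k$, which immediately forces $p^{k}\equiv\0$ on $M$ whenever $s_{k}<0$, and (b) $\big(\sum_{j}p_{i}^{j}\big)(B^{k}p^{k})_{i}=0$ for every $k$ and every $i$. For indices $k$ with $s_{k}=0$, I would argue by contradiction: suppose some point of $M$ has $p^{k}\neq\0$ and let $T_{k}=\{i:(B^{k}p^{k})_{i}>0\}$. Condition (b) then forces $\sum_{j}p^{j}_{i}=0$ (in particular $p^{k}_{i}=0$) for each $i\in T_{k}$, but the dynamics \rep{eq:sysi} give $\dot p^{k}_{i}=(B^{k}p^{k})_{i}>0$ at such $i$; hence $p^{k}_{i}$ becomes strictly positive just after $t_{0}$ while $(B^{k}p^{k})_{i}$ remains positive by continuity, violating (b) and breaking invariance. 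Thus $T_{k}=\emptyset$, i.e.\ $B^{k}p^{k}=\0$; but for $p^{k}\neq\0$ this forces every column of $B^{k}$ indexed by $\mathrm{supp}(p^{k})$ to vanish, contradicting irreducibility of $B^{k}$. Hence $M=\{\0\}$.

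LaSalle then delivers $p(t)\to\0$ from every initial condition in $\D$, giving asymptotic stability with domain of attraction $\D$; uniqueness of the equilibrium is automatic because any equilibrium $p^{*}$ gives a constant trajectory contained in $E$, hence in $M=\{\0\}$. The main obstacle I anticipate is exactly the boundary case $s_{k}=0$: the linear part of $\dot V$ vanishes along the Perron direction, so the required contraction must be extracted from the nonlinear coupling term $-(u^{k})^{T}(\sum_{j}P^{j})B^{k}p^{k}$ combined with irreducibility of $B^{k}$---this is precisely what the $T_{k}$-argument accomplishes.
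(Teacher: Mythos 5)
Your proof is correct, but it takes a genuinely different route from the paper's. The paper's argument is a two-line comparison: since $\sum_j p^j_i \ge p^k_i$ on $\D$, each $\dot p^k_i$ is dominated by the corresponding single-virus SIS dynamics, and convergence of $p^k(t)$ to $\0$ is then imported wholesale from Proposition~3 of \cite{arxiv} (which handles the single-virus case, including the marginal situation $s(B^k-D^k)=0$). You instead give a self-contained argument: a linear copositive Lyapunov function $V(p)=\sum_k (u^k)^Tp^k$ built from the Perron--Frobenius left eigenvectors of the irreducible Metzler matrices $B^k-D^k$, combined with LaSalle on the compact invariant set $\D$. Your computation of $\dot V$ and the sign analysis are correct, and the delicate part --- the marginal case $s_k=0$, where the linear term of $\dot V$ gives nothing along the Perron direction --- is handled properly by your $T_k$ invariance argument: a point of the putative invariant set with $(B^kp^k)_i>0$ and $\sum_j p^j_i=0$ has $\dot p^k_i>0$ and is immediately ejected from $\{\dot V=0\}$, and the residual case $B^kp^k=\0$ with $p^k\neq\0$ contradicts irreducibility (zero columns). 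What your approach buys is transparency and self-containedness: the hard case is resolved inside the proof rather than delegated to a cited single-virus result, and uniqueness of the equilibrium falls out for free since every equilibrium lies in the largest invariant subset of $\{\dot V=0\}$. What the paper's approach buys is brevity and reuse of existing machinery. Two minor points you may wish to make explicit: Lyapunov stability (as opposed to mere attractivity) follows because the sublevel sets of $V$ intersected with $\D$ are forward invariant neighborhoods of $\0$; and the zero-column contradiction implicitly assumes $n\ge 2$ (the $n=1$ case is degenerate and excluded by any reasonable convention on irreducibility). Neither affects the substance.
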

\begin{proof}
To prove the theorem, it is sufficient to show that for all $k\in[m]$,
 $p^{k}(t)$ will asymptotically converge to $\0$ as $t\rightarrow\infty$
for any initial condition.

Since for all $k\in[m]$, $p^{k}_i(t)$ is always non-negative by Lemma \ref{box},
from \rep{eq:sysi}, 
\begin{eqnarray*}
\dot{p}^{1}_i(t) \le - \delta^{1}_i p^{k}_i(t) + (1 - p^{k}_i(t))\sum_{j=1}^n \beta^{k}_{ij}  p^{1}_j(t) ,
\end{eqnarray*}
which implies that the trajectories of $p^{k}_i(t)$ are bounded above by
a single-virus model. Since the $B^{k}$'s are non-negative and irreducible, by Proposition 3 in \cite{arxiv},
$p^{k}_i(t)$ will asymptotically converge to $\0$ as $t\rightarrow\infty$ for all $k\in[m]$,
and thus the healthy state is the unique equilibrium of \rep{sys}.
\end{proof}
We next state a result on global exponential stability for the case when  the underlying subgraphs are undirected and the infection rates are  symmetric.
\begin{theorem}\label{thm:1}
Suppose $B^k$ is symmetric, and the maximum eigenvalue of $B^{k}-D^{k}$ is less than zero, that is $ \lambda_1(B^{k}-D^{k})<0$. Then the DFE is exponentially stable for virus $k$, with  domain of attraction $\D$, 
in \rep{D}.
\end{theorem}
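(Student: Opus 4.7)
The plan is to construct a quadratic Lyapunov function and exploit the symmetry hypothesis to get a strict decrease. Define $V(t) = \tfrac{1}{2}(p^{k}(t))^{T} p^{k}(t)$. Differentiating along trajectories of \rep{sys} and abbreviating $P_{\Sigma}(t) := \sum_{j=1}^{m} P^{j}(t)$ gives
\[
\dot V = (p^{k})^{T}\bigl[(I - P_{\Sigma}) B^{k} - D^{k}\bigr] p^{k}
= (p^{k})^{T}(B^{k}-D^{k})\,p^{k} \;-\; (p^{k})^{T} P_{\Sigma} B^{k} p^{k}.
\]

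The key observation is that the cross term is non-negative and can be dropped. By Lemma \ref{box}, $p^{k}(t)\ge \0$ for all $t\ge 0$ under the standing initial-condition assumption, and $P_{\Sigma}(t)$ is diagonal with non-negative entries. Since $B^{k}\ge 0$ entrywise, the vector $B^{k} p^{k}$ is non-negative, and hence
\[
(p^{k})^{T} P_{\Sigma} B^{k} p^{k} \;=\; \sum_{i=1}^{n} p^{k}_{i}\Bigl(\sum_{j=1}^{m} p^{j}_{i}\Bigr)[B^{k} p^{k}]_{i} \;\ge\; 0.
\]
Therefore $\dot V \le (p^{k})^{T}(B^{k}-D^{k})\,p^{k}$.

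Next I would invoke the symmetry of $B^{k}$ (and of the diagonal $D^{k}$), which upgrades $s(B^{k}-D^{k})$ to the largest eigenvalue $\lambda_{1}(B^{k}-D^{k})$. The Rayleigh-quotient bound then yields
\[
(p^{k})^{T}(B^{k}-D^{k})\,p^{k} \;\le\; \lambda_{1}(B^{k}-D^{k})\,\|p^{k}\|^{2} \;=\; 2\lambda_{1}(B^{k}-D^{k})\,V.
\]
Setting $\alpha := -\lambda_{1}(B^{k}-D^{k})>0$ by hypothesis, we obtain $\dot V \le -2\alpha V$, and Gr\"onwall's inequality delivers $\|p^{k}(t)\| \le \|p^{k}(0)\|\,e^{-\alpha t}$ for every initial condition in $\D$.

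The main substantive step is noticing that the coupling of virus $k$ to the other viruses enters only through the non-negative diagonal scaling $P_{\Sigma}$ multiplying the non-negative vector $B^{k}p^{k}$, so it contributes a term of a favorable sign to $\dot V$ and can be discarded outright. Everything else is routine: positivity of the state via Lemma \ref{box}, symmetry to justify the Rayleigh-quotient bound, and a standard comparison argument. Note that the argument does not require convergence of the other viruses $p^{j}$, $j\ne k$ — only that the competition cannot \emph{help} virus $k$ grow, which is precisely what the sign of the cross term encodes.
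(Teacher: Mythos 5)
Your proposal is correct and follows essentially the same route as the paper: the quadratic Lyapunov function $V=\tfrac{1}{2}(p^{k})^{T}p^{k}$, dropping the non-negative coupling term $(p^{k})^{T}\bigl(\sum_{l}P^{l}\bigr)B^{k}p^{k}$, and the Rayleigh--Ritz bound via symmetry of $B^{k}-D^{k}$. The only cosmetic difference is that you close with Gr\"onwall's inequality where the paper cites a standard exponential-stability theorem from Khalil; the substance is identical.
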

\begin{proof}
Consider the Lyapunov function $V(p^{k}) = 
\frac{1}{2}(p^{k})^T p^{k}$.  For $p^{k}\neq0$,
\begin{equation}
\begin{array}{lcl}
 \dot{V}(p^{k}) &=& (p^{k})^T\dot{p}^{k}\\ & = & (p^{k})^T(B^{k} -\sum_{l = 1}^m P^{l}B^{k} -D^{k})p^{k}  \\
 &\leq & (p^{k})^T(B^{k}-D^{k})p^{k} \\
 &\leq& \lambda_1(B^{k}-D^{k})\|p^{k}\|^2 <0.
\end{array}
\end{equation}
The first inequality holds because $(P^{l}B^{k})_{ij}\geq 0, \ \forall l,i,j$ by construction  since each $p^{l}_i(t)$ is a probability. The second inequality holds by the Rayleigh-Ritz Theorem  because $B^{k}-D^{k}$ is symmetric. Therefore, the system converges exponentially fast to the origin by Theorem 8.5 in \cite{khalil1996nonlinear}.
\end{proof}
\noindent Note that this is a generalization of the result in \cite{pare2015stability,pare2017epidemic}.

We can state that the condition in Theorem \ref{0global} is necessary and sufficient for eradication of all viruses.
\begin{theorem}\label{thm3}
Suppose  $\delta^{k}_i\ge0$, for all $i,k$, and the matrices $B^{k}$ are non-negative and irreducible for all $k$. 
The DFE (all $k$ viruses eradicated) is the unique equilibrium of \eqref{sys} if and only if
$s(B^{k}-D^{k})\leq 0$ for all $k \in  [m]$.
\end{theorem}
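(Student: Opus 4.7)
The sufficiency direction is immediate from Theorem \ref{0global}, which already guarantees that $s(B^{k}-D^{k})\le 0$ for every $k$ forces the DFE to be the unique equilibrium. So the content of the new statement is the necessity direction, which I would establish via the contrapositive: if $s(B^{k}-D^{k})>0$ for some $k\in[m]$, then \eqref{sys} admits a non-disease-free equilibrium.

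The plan is to single out virus $k$ and use the known single-virus theory. Fix $k$ with $s(B^{k}-D^{k})>0$. Since $B^{k}$ is irreducible and non-negative and $D^{k}$ is diagonal with non-negative entries, the classical result for the standard SIS model (for instance, the fixed-point/endemic equilibrium results used in \cite{VanMieghem2014pstar,khanafer2014stability,pare2015stability,pare2017epidemic}, which are cited earlier in the paper for precisely this purpose) guarantees the existence of a unique equilibrium $\bar p^{k}\gg \0$ with $\bar p^{k}\le \1$ of the single-virus dynamics
\begin{equation*}
\dot q(t) = \bigl((I-\mathrm{diag}(q(t)))B^{k}-D^{k}\bigr)q(t).
\end{equation*}
In particular, $\bar p^{k}$ satisfies $((I-\bar P^{k})B^{k}-D^{k})\bar p^{k}=\0$, where $\bar P^{k}=\mathrm{diag}(\bar p^{k})$, and $\bar p^{k}$ lies strictly inside $[0,1]^n$.

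Now I would assemble a candidate equilibrium for the full multi-virus system \eqref{sys} by setting $p^{k}=\bar p^{k}$ and $p^{l}=\0$ for every $l\ne k$. Since $\sum_{l}\bar P^{l}=\bar P^{k}$ in this assignment, the $k$th equation reduces exactly to the single-virus equilibrium condition above and therefore vanishes, while the $l$th equation for $l\ne k$ vanishes trivially because $p^{l}=\0$ multiplies the whole right-hand side. The point clearly belongs to $\D$ defined in \eqref{D}, is distinct from the origin, and is thus a non-disease-free equilibrium of \eqref{sys}. This contradicts the hypothesis that the DFE is the unique equilibrium, completing the necessity direction.

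The main obstacle is really a bookkeeping one rather than a conceptual one: I need to invoke (and make sure the paper has set up) the single-virus existence result that turns $s(B^{k}-D^{k})>0$ into an actual fixed point $\bar p^{k}\in(0,1]^n$, and then verify compatibility of this fixed point with the coupled system when the remaining viruses are set to zero. Irreducibility of $B^{k}$ is used precisely at this step to get a strictly positive $\bar p^{k}$. Once that is in place the verification that the constructed point is an equilibrium of \eqref{sys} is a direct substitution, and the rest is just combining the two implications into an "if and only if".
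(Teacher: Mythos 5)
Your proposal is correct and follows essentially the same route as the paper: sufficiency is delegated to Theorem \ref{0global}, and necessity is obtained by setting all other viruses to zero so that the $k$th dynamics reduce to a single-virus system, whose known endemic equilibrium (the paper invokes Proposition 4 of \cite{arxiv}) yields a NDFE of \eqref{sys} of the form $(\0,\dots,\bar p^{k},\dots,\0)$ with $\bar p^{k}\gg\0$. No substantive differences to report.
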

\begin{proof}
Sufficiency has been shown in Theorem \ref{0global}.
Therefore, to prove the theorem, all that needs to be shown is that if for any $j\in[m]$  $s(B^{j}-D^{j})> 0$,
the system \rep{sys} admits a NDFE.

Without loss of generality, suppose that $s(B^{1}-D^{1})> 0$. Set $p^{k}=\0$ for all $k=2,\dots,m$. Then,
the dynamics of $p^{1}$ simplifies to a single-virus system, which admits a NDFE
by Proposition 4 in \cite{arxiv}. Therefore, in the case when  $s(B^{1}-D^{1})> 0$,
the system \rep{sys} always admits an equilibrium of the form $(\tilde p^{1}, \0, \dots, \0)$ with $\tilde p^{1}\gg \0$.
\end{proof}


\subsection{Dynamic Graph Structure}

We can generalize the model from \eqref{eq:sysi} to have dynamic graph structure as
\begin{equation}\label{eq:ntv}
\dot{p}^{k}_i(t) = (1 - p^{1}_i(t) - \dots - p^{m}_i(t))\sum_{j=1}^n \beta^{k}_{ij}(t)  p^{k}_j(t) - \delta^{k}_i p^{k}_i(t), 
\end{equation}
where $\beta^{k}_{ij}(t)$ is a function of time and the equation holds for $k = 1,\dots,m$.
We now provide a sufficient condition for global exponential stability of the DFE.
\begin{theorem}\label{thm:1t}
Suppose $B^k(t)$ is symmetric, piecewise continuous in $t$, and bounded, and $\sup_{t\geq0}  \lambda_1(B^{k}(t)-D^{k})<0$. Then the DFE is exponentially stable for virus $k$, with  domain of attraction $\D$, 
in \rep{D}.
\end{theorem}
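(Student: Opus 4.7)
The plan is to mimic the proof of Theorem~\ref{thm:1} with the same quadratic Lyapunov function, extracting a uniform-in-$t$ decay rate from the hypothesis $\sup_{t\ge 0}\lambda_1(B^{k}(t)-D^{k})<0$. First, I would verify that Lemma~\ref{box} carries over to the time-varying system \eqref{eq:ntv}: its argument only invoked non-negativity of $B^{k}$ and $\delta_i^k\ge 0$ at each instant, so with $B^{k}(t)$ non-negative and bounded and $p^k(0),\sum_k p^k(0)\in[0,1]$, the invariance of $\D$ is preserved along trajectories. Piecewise continuity and boundedness of $B^k(t)$ also guarantee existence of Carath\'eodory solutions, so the Lyapunov analysis is meaningful.

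Next, take $V(p^{k})=\tfrac{1}{2}(p^{k})^T p^{k}$ and differentiate along \eqref{eq:ntv}:
\begin{equation*}
\dot V(p^k) \;=\; (p^{k})^T\!\left(B^{k}(t) - \sum_{l=1}^m P^{l}(t)B^{k}(t) - D^{k}\right) p^{k}.
\end{equation*}
By invariance of $\D$, each $P^l(t)$ is diagonal with entries in $[0,1]$, and $B^{k}(t)$ is entry-wise non-negative, so $P^{l}(t)B^{k}(t)$ has non-negative entries; hence $(p^{k})^T P^{l}(t) B^{k}(t) p^{k}\ge 0$ since $p^k\ge\0$. Dropping those non-negative cross-inhibition terms gives
\begin{equation*}
\dot V(p^k) \;\le\; (p^{k})^T\bigl(B^{k}(t)-D^{k}\bigr) p^{k}.
\end{equation*}

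Now exploit the symmetry of $B^{k}(t)$ (and of $D^{k}$): pointwise in $t$, Rayleigh--Ritz gives $(p^{k})^T(B^{k}(t)-D^{k})p^{k}\le \lambda_1(B^{k}(t)-D^{k})\|p^{k}\|^{2}$. Setting $\alpha \;:=\; -\sup_{t\ge 0}\lambda_1(B^{k}(t)-D^{k}) \;>\; 0$, we obtain the uniform bound $\dot V(p^{k})\le -\alpha\|p^{k}\|^{2} = -2\alpha V(p^{k})$ for every $t\ge 0$ and every $p^k\in\D$. Applying the standard comparison/Gronwall argument (or directly Theorem~4.10 in \cite{khalil1996nonlinear}, the time-varying analog of Theorem~8.5 used in Theorem~\ref{thm:1}) yields $\|p^{k}(t)\|\le e^{-\alpha t}\|p^{k}(0)\|$, i.e., exponential stability of the DFE with domain of attraction $\D$.

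The main obstacle I foresee is the uniformity in $t$: unlike the static case, one must ensure that the Rayleigh bound collapses to a single negative constant rather than a $t$-dependent quantity that could vanish along a subsequence. This is precisely what the hypothesis $\sup_{t\ge 0}\lambda_1(B^{k}(t)-D^{k})<0$ buys (together with boundedness/piecewise continuity so the supremum is meaningful and solutions are defined for all $t\ge 0$); once that uniform gap $\alpha>0$ is extracted, the rest is the same Lyapunov comparison as in the static symmetric case.
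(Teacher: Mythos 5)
Your proposal is correct and follows essentially the same route as the paper: the quadratic Lyapunov function $V(p^k)=\tfrac{1}{2}(p^k)^Tp^k$, dropping the non-negative $P^l B^k(t)$ terms, Rayleigh--Ritz via symmetry, and then the supremum hypothesis to get a uniform negative decay rate. Your additional remarks on the invariance of $\D$ for the time-varying system and on extracting the uniform gap $\alpha>0$ are sound refinements of what the paper leaves implicit, but the argument is the same.
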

\begin{proof}
Consider the Lyapunov function $V(p^{k}) = 
\frac{1}{2}(p^{k})^T p^{k}$.  For $p^{k}\neq0$,
\begin{equation}
\begin{array}{lcl}
 \dot{V}(p^{k}) &=& (p^{k})^T\dot{p}^{k}\\ & = & (p^{k})^T(B^{k}(t) -\sum_{l = 1}^m P^{l}B^{k}(t) -D^{k})p^{k}  \\
 &\leq & (p^{k})^T(B^{k}(t)-D^{k})p^{k} \\
 &\leq& \lambda_1(B^{k}(t)-D^{k})\|p^{k}\|^2 \\
 &\leq& (\sup_{t\geq 0} \lambda_1(B^{k}(t)-D^{k}))\|p^k\|^2 <0.
\end{array}
\end{equation}
The first inequality holds because $(P^{l}B^{k}(t))_{ij}\geq 0, \ \forall l,i,j,t$ by construction  since each $p^{l}_i(t)$ is a probability. The second inequality holds by the Rayleigh-Ritz Theorem  because $B^{k}(t)-D^{k}$ is symmetric. The last inequality holds by definition of the supremum. Therefore, the system converges exponentially fast to the origin by Theorem 8.5 in \cite{khalil1996nonlinear}.
\end{proof}
\noindent This result is a generalization of Theorem 1 in \cite{pare2015stability,pare2017epidemic}.

We can also show exponential stability for the case when the infection rates are not symmetric and the underlying subgraphs are undirected, with some added assumptions.
\begin{definition}\label{def:gam}
For a given virus $k$, assume  that for all $t\geq 0$, there exist $c^{k}(t),\lambda^{k}(t)>0$ such that
\begin{equation}\label{eq:c}
\|B^{k}(t) - D^{k}\| \leq c^{k}(t) e^{-\lambda^{k}(t) t} \ \ \forall t\geq 0.
\end{equation}
We then define 
\begin{align}\label{eq:gam2}
 \gamma^{k}_1 &:= \sup_{t\geq0}\int_0^{\infty} c^{k}(t)^2 e^{-2\lambda^{k}(t) \tau}d\tau.
\end{align}
\end{definition}
Note that 
\begin{equation}\label{eq:gam3}
    \gamma^{k}_1 \geq \left\|\int_0^{\infty} e^{(B^{k}(t) - D^{k})^T\tau}e^{(B^{k}(t) - D^{k})\tau}d\tau\right\|.
\end{equation}
\begin{theorem}
Consider the dynamics for virus $k$ in \eqref{eq:ntv} with $B^{k}(t)$ continuously differentiable and $B^k(t) - D^k$ bounded, that is, there exists an $L>0$ such that $\|B^{k}(t)-D^{k}\|\leq L \ \forall t$. 
Assume that $\sup_{t\geq0} s(B^{k}(t)-D^{k})<0$ and $\gamma^{k}_1$ in Definition \ref{def:gam} is finite. 
If  $\sup_{t>0} \|\dot{B}^{k}(t) - D^{k}\| < \frac{1}{2(\gamma^{k}_1)^2}$ or $\int_t^{t+T}\|\dot{B}^{k}(s) - D^{k}\|ds \leq \mu T + \alpha$ for small enough $\mu>0$, then the DFE is exponentially stable for virus $k$, with  domain of attraction $\D$, 
in \rep{D}.
\end{theorem}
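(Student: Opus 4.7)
The plan is to reduce the stability question for the full coupled multi-virus dynamics to an LTV comparison system and then apply a time-varying quadratic Lyapunov function tailored to the frozen-time dynamics. As in the proof of Theorem \ref{thm:1t}, each factor $(1-p^{1}_i-\cdots-p^{m}_i)$ lies in $[0,1]$ and the cross-terms $P^{l}B^{k}(t) p^{k}$ contribute non-negatively when $p^{k}\geq \0$. Thus, componentwise, $\dot p^{k}(t) \leq (B^{k}(t)-D^{k}) p^{k}(t)$, so if the linear comparison system $\dot x(t) = A(t) x(t)$ with $A(t):=B^{k}(t)-D^{k}$ is exponentially stable starting from $\D$, the same will hold for $p^{k}(t)$. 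The remainder of the proof concentrates on this LTV comparison system.

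Since $\sup_{t\geq 0} s(A(t)) < 0$, each frozen matrix $A(t)$ is Hurwitz, and Definition \ref{def:gam} together with \eqref{eq:gam3} makes the candidate Lyapunov kernel
\begin{equation*}
P(t) = \int_0^\infty e^{A(t)^T \tau} e^{A(t) \tau}\, d\tau
\end{equation*}
well defined with $\|P(t)\| \leq \gamma^{k}_1$. By construction, $P(t)$ satisfies the pointwise Lyapunov identity $A(t)^T P(t) + P(t) A(t) = -I$. Taking $V(x,t) = x^T P(t) x$ and differentiating along trajectories gives
\begin{equation*}
\dot V = -\|x\|^2 + x^T \dot P(t) x,
\end{equation*}
so the problem reduces to controlling $\dot P(t)$ by $\dot A(t) = \dot B^{k}(t)$ (writing $\dot B^{k}(t)-D^{k}$ in the hypothesis as a shorthand for the derivative of $B^{k}(t)-D^{k}$, since $D^{k}$ is constant).

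The key estimate, which is the principal technical obstacle, is that differentiating the Lyapunov identity yields $A^T \dot P + \dot P A = -(\dot A^T P + P \dot A)$, whose solution has the integral form $\dot P(t) = \int_0^\infty e^{A(t)^T\tau}\bigl(\dot A(t)^T P(t) + P(t) \dot A(t)\bigr) e^{A(t)\tau}\, d\tau$. Using \eqref{eq:gam3} and $\|P(t)\|\leq \gamma^{k}_1$, this yields $\|\dot P(t)\| \leq 2 (\gamma^{k}_1)^2 \,\|\dot B^{k}(t)\|$. Under the first hypothesis $\sup_{t>0}\|\dot B^{k}(t)\| < \tfrac{1}{2(\gamma^{k}_1)^2}$, we get $\|\dot P(t)\|<1$ uniformly, hence $\dot V \leq -(1-\|\dot P(t)\|) \|x\|^2 \leq -\tfrac{1-\|\dot P(t)\|}{\gamma^{k}_1} V$, and exponential stability of the LTV comparison system follows from Theorem 8.5 in \cite{khalil1996nonlinear}.

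For the alternative integral hypothesis $\int_t^{t+T} \|\dot B^{k}(s)\|\,ds \leq \mu T + \alpha$, the plan is to integrate the same inequality $\dot V \leq -\|x\|^2 + 2(\gamma^{k}_1)^2 \|\dot B^{k}(t)\| \|x\|^2$ over windows of length $T$, giving an average decrement of order $(1 - 2(\gamma^{k}_1)^2 \mu) T - 2(\gamma^{k}_1)^2 \alpha$. For $\mu$ small enough relative to $1/(2(\gamma^{k}_1)^2)$, this produces a uniform negative drift of $V$ on each window and, by standard LTV averaging/Lyapunov arguments, exponential decay of $\|x(t)\|$, from which exponential stability of $p^{k}(t)$ on $\D$ follows via the componentwise comparison. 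The boundedness assumption $\|B^{k}(t)-D^{k}\|\leq L$ is used only to ensure continuous differentiability of $P(t)$ and validity of the interchange of derivative and integral in the $\dot P$ computation.
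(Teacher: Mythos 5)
Your proof follows the same route as the paper's: the componentwise comparison $\dot p^k \le (B^k(t)-D^k)p^k$ reduces everything to the linear time--varying comparison system, after which the paper simply cites Lemma 2 of \cite{pare2017epidemic}, whereas you supply that lemma's content yourself --- the standard slowly-time-varying Lyapunov argument with $P(t)=\int_0^\infty e^{A(t)^T\tau}e^{A(t)\tau}\,d\tau$, the identity $A(t)^TP(t)+P(t)A(t)=-I$, and the bound $\|\dot P(t)\|\le 2(\gamma^k_1)^2\|\dot B^k(t)\|$, which is exactly how the cited lemma is established. The one small slip is your closing remark: the hypothesis $\|B^k(t)-D^k\|\le L$ is needed not only to justify differentiating under the integral but also to obtain the lower bound $x^TP(t)x=\int_0^\infty\|e^{A(t)\tau}x\|^2d\tau\ge \tfrac{1}{2L}\|x\|^2$, without which $V$ is not uniformly positive definite and exponential decay of $V$ cannot be converted into exponential decay of $\|x\|$.
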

\begin{proof}
Note that since $(P^{l}(t)B^{k}(t))_{ij}\geq 0 \ \forall l,i,j$, by construction,
\begin{equation}
\begin{array}{lcl}
\dot{p}^k &=& (B^{k}(t) -\sum_{l = 1}^m P^{l}B^{k} -D^{k})p^k  \\
 &\leq & (B^{k}(t) - D^{k})p^k.
\end{array}
\end{equation}
Therefore, by Gr\"{o}nwall's Inequality (\cite{khalil1996nonlinear}), the solution of the original system will be bounded above by the solution of the linear system. 
Thus by Lemma 2 in \cite{pare2017epidemic}, the DFE is exponentially stable for virus $k$.
\end{proof}
\noindent Note that this theorem is a generalization of a single virus result provided in \cite{pare2017epidemic}, where Lemma 2 in \cite{pare2017epidemic} is for a less general model; however the same arguments hold by replacing $BA(t)$ with $B^k(t)$ and $B\dot{A}(t)$ with $\dot{B}^k(t)$.

\begin{theorem}\label{thm:nonH}
Consider the dynamics for virus $k$: 
\begin{equation*}
  \dot{p}^k  = (B^k(t)+\Delta^k(t) - P(t)(B^k(t)+\Delta^k(t)) - D^k )p^k.
\end{equation*}
Assume that
\begin{equation}\label{eq:a1} 
\lim_{T\rightarrow \infty}\frac{1}{T} \int_{t_0}^{t_0+T} \| B^k(s) - D^k\| ds \leq a < \infty,
\end{equation}
for all $t_0\geq 0$, and for some $\nu>0$ there exists an $h>0$ such that 
\begin{equation}\label{eq:a2}
\|B^k(t + h) -B^k(t)\| \leq \nu h^{\gamma},
\end{equation}
for all $t\geq 0$ and some $\gamma,$ $0 < \gamma \leq 1$. Assume further that
\begin{equation}\label{eq:avg}
 \lim_{T\rightarrow \infty}\frac{1}{T} \int_{t_0}^{t_0+T} s_1( B^k(s) - D^k)ds \leq \bar{\alpha},
\end{equation} 
for some negative scalar $\bar{\alpha}$ and for all $t_0\geq 0$,  
\begin{equation}\label{eq:a4}
 \lim_{T\rightarrow \infty}\frac{1}{T} \int_{t_0}^{t_0+T} \| \Delta^k(s)\| ds \leq \eta < \infty,
\end{equation} 
for all $t_0\geq 0$, and for all $i,j$ and $t\geq0$ the perturbation
\begin{equation}\label{eq:per}
    |\Delta_{ij}^k(t)| \leq \beta^k_{ij}(t).
\end{equation}
Then the origin is exponentially stable for virus $k$.
\end{theorem}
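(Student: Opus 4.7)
The plan is to follow the same comparison strategy used in the previous theorem and then invoke an averaging-plus-perturbation result from \cite{pare2017epidemic}. The first step is to verify that the positivity argument of Lemma \ref{box} survives under the perturbation: condition \eqref{eq:per} gives $B^k(t)+\Delta^k(t)\ge 0$ entry-wise, so the matrix $B^k(t)+\Delta^k(t)-D^k$ remains Metzler. Consequently $p^k(t)\ge \0$ for all $t\ge 0$, and therefore each coordinate of $P(t)(B^k(t)+\Delta^k(t))p^k$ is non-negative.

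The second step is the same entrywise upper bound used in the preceding theorems. Since the subtracted quantity is coordinate-wise non-negative,
\begin{equation*}
\dot{p}^k \;=\; \bigl(B^k(t)+\Delta^k(t)-D^k\bigr)p^k \;-\; P(t)\bigl(B^k(t)+\Delta^k(t)\bigr)p^k \;\le\; \bigl(B^k(t)+\Delta^k(t)-D^k\bigr)p^k.
\end{equation*}
By the Metzler comparison principle (a positive-system variant of Gr\"onwall's inequality, as used in the previous theorem), the solution is bounded above by the linear time-varying system
\begin{equation*}
\dot{x}(t) \;=\; \bigl(B^k(t)+\Delta^k(t)-D^k\bigr)x(t),
\end{equation*}
with the same initial condition. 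Exponential stability of this linear comparison system implies exponential stability of the DFE for virus $k$.

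The third step is to apply Lemma 2 of \cite{pare2017epidemic} (the authors already remarked that its proof generalizes by substituting $B^k(t)$ for $BA(t)$). Here we view $A(t):=B^k(t)-D^k$ as the nominal part and $\Delta^k(t)$ as the perturbation. The nominal part inherits the average boundedness \eqref{eq:a1}, H\"older-type slow-variation bound \eqref{eq:a2}, and average-negative-spectral-abscissa condition \eqref{eq:avg} needed to conclude exponential stability of $\dot{x} = A(t) x$, and the time-average perturbation bound \eqref{eq:a4} lets one absorb $\Delta^k(t)$ into the exponential estimate via a Gr\"onwall step on the perturbed transition matrix. Once the sum $A(t)+\Delta^k(t)$ is shown to generate an exponentially decaying evolution operator, the comparison argument above finishes the proof.

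The main obstacle I anticipate is the perturbation step: the hypothesis \eqref{eq:a4} asserts only that the average of $\|\Delta^k(s)\|$ is bounded, not that it is small relative to $|\bar\alpha|$. Closing the gap therefore seems to require the stronger reading that $\eta$ is sufficiently small compared with the negative average spectral abscissa $\bar\alpha$ from \eqref{eq:avg}, so that when one writes the perturbed transition matrix as a Peano--Baker series around the nominal one, the averaged contribution of $\Delta^k(t)$ does not overwhelm the decay rate provided by the nominal exponential estimate. This is exactly the type of balance handled by Lemma 2 of \cite{pare2017epidemic}, and making sure the hypotheses here are indeed what that lemma consumes is the delicate part of the proof.
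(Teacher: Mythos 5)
Your proposal follows essentially the same route as the paper's proof: establish non-negativity of the subtracted term via \eqref{eq:per} and Lemma \ref{box}, bound the trajectory above by the linear system $\dot{x}=(B^k(t)+\Delta^k(t)-D^k)x$ via the comparison/Gr\"onwall step, and then invoke Lemma 2 of \cite{pare2017epidemic} to conclude exponential stability. Your closing caveat --- that \eqref{eq:a4} as written only asserts $\eta<\infty$ whereas the cited perturbation lemma really needs $\eta$ small relative to $|\bar{\alpha}|$ --- is a fair observation about the theorem's hypotheses that the paper's own (very terse) proof does not address, but it does not separate your argument from theirs.
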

\begin{proof}
Since $(P(t)(B^k(t)+\Delta^k(t)))_{ij}\geq 0 \ \forall i,j$ by \eqref{eq:per} and Lemma \ref{box}, 
\begin{align*}
\dot{p}^k &= (B^k(t)+\Delta^k(t) - P(t)(B^k(t)+\Delta^k(t)) - D^k )p^k  \\
 &\leq  (B^k(t)+\Delta^k(t)) - D^k)p^k.
\end{align*}
Therefore, by Gr\"{o}nwall's Inequality (\cite{khalil1996nonlinear}), the solution of the original system will be bounded above by the solution of the linear system. 
Thus by Lemma 2 in \cite{pare2017epidemic}, the origin is exponentially stable for virus $k$.
\end{proof}
This result says that if the linearized system is Hurwitz on the average (not strictly Hurwitz for all time, as in the other theorems up to this point), then the system converges to the DFE. 
This fact is useful in the control design in Section~\ref{sec:control}.

\section{NDFE for the Multi-Virus Case}\label{sec:ndfe}

There are a number of different epidemic equilibria. The simplest scenario is when one virus is in an epidemic state and the remaining viruses are eradicated. 
\begin{theorem}
Assume $\delta^{k}_i\ge0$, for all $i,k$, and the matrices $B^{k}$ are non-negative and irreducible for all $k$.
If for some $i\in [m]$, $s(B^{i}-D^{i})> 0$ and $s(B^{k}-D^{k})\leq 0$ for all $k\neq i$, then
\eqref{sys} has two equilibria,
the healthy state $(\0,\dots , \0)$, which is asymptotically stable with domain of attraction $\{(p^1,\dots,p^m )| p^i = \0 \text{ and } p^k \in [0,1]^n \ \forall k \neq i\}$, and a unique epidemic state of the form $(\0, \dots, \0, \tilde p^{i}, \0, \dots, \0)$ with $\tilde p^{i}\gg \0$, which is
asymptotically stable with  domain of attraction $\D\setminus\{(p^1,\dots,p^m )| p^i = \0 \text{ and } p^k \in [0,1]^n \ \forall k \neq i\}$, with $\D$ defined in \eqref{D}.
\label{eglobal}\end{theorem}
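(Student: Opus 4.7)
My plan is to decouple the dynamics by first showing that all viruses $k \neq i$ die out, and then appealing to the known single-virus theory for the remaining virus $i$. I would begin exactly as in the proof of Theorem \ref{0global}: since $p^l_j(t) \ge 0$ for every $l$ by Lemma \ref{box}, bounding $1 - \sum_l p^l_j \le 1 - p^k_j$ in \eqref{eq:sysi} gives
\begin{equation*}
\dot p^k_j(t) \le (1 - p^k_j(t)) \sum_{l=1}^n \beta^k_{jl}\, p^k_l(t) - \delta^k_j p^k_j(t)
\end{equation*}
for every $k \neq i$. Applying the comparison principle for monotone flows together with Proposition 3 of \cite{arxiv}---valid because $B^k$ is non-negative and irreducible with $s(B^k-D^k) \le 0$---the dominating single-virus trajectory tends to $\0$, so $p^k(t) \to \0$ for every $k \neq i$.

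Next I would separate the two regimes for virus $i$. If $p^i(0) = \0$, then $\dot p^i|_{p^i = \0} = \0$ in \eqref{eq:sysi}, so $p^i(t) \equiv \0$; combined with the extinction of the other viruses, every such trajectory converges to the healthy state, which yields exactly the claimed domain of attraction for $(\0,\dots,\0)$. If instead $p^i(0) \neq \0$, I would rewrite the $i$th equation as a vanishing perturbation of the single-virus SIS on $(B^i, D^i)$:
\begin{equation*}
\dot p^i = (I - P^i) B^i p^i - D^i p^i - E(t) B^i p^i, \qquad E(t) := \sum_{l \neq i} P^l(t),
\end{equation*}
with $E(t) \to 0$ by the first paragraph. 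Since $s(B^i - D^i) > 0$ and $B^i$ is non-negative and irreducible, Proposition 4 of \cite{arxiv} supplies a unique NDFE $\tilde p^i \gg \0$ for the unperturbed SIS that is globally asymptotically stable on $[0,1]^n \setminus \{\0\}$. To transfer this to the perturbed flow I would squeeze $p^i(t)$ between two single-virus SIS trajectories: an upper bound obtained by discarding $-E(t)B^i p^i$, which converges to $\tilde p^i$; and a lower bound obtained by choosing $T_\varepsilon$ with $\|E(t)\|_\infty < \varepsilon$ for $t \ge T_\varepsilon$ and noting that
\begin{equation*}
\dot p^i_j \ge (1 - \varepsilon - p^i_j) \sum_{l=1}^n \beta^i_{jl}\, p^i_l - \delta^i_j p^i_j,
\end{equation*}
which after the rescaling $p^i = (1-\varepsilon) q$ becomes a standard SIS with infection matrix $(1-\varepsilon) B^i$ and healing matrix $D^i$. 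For $\varepsilon$ small enough, $s((1-\varepsilon) B^i - D^i) > 0$, and the corresponding NDFE converges to $\tilde p^i$ as $\varepsilon \downarrow 0$ by continuity of the Perron--Frobenius eigenvector of an irreducible non-negative matrix. Letting $\varepsilon \downarrow 0$ sandwiches $p^i(t)$ at $\tilde p^i$.

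Finally, uniqueness of equilibria follows because any equilibrium is a constant trajectory, so the extinction argument forces $\bar p^k = \0$ for every $k \neq i$; the $i$th equation then reduces to the single-virus SIS on $(B^i, D^i)$, whose only equilibria are $\0$ and $\tilde p^i$. The main obstacle is the squeeze in the third paragraph: what really needs justification is the continuity in $\varepsilon$ of the rescaled NDFE (a Perron--Frobenius continuity argument) together with the cooperative-systems comparison that validates the lower bound. Every other step is a direct appeal to results and bounds already assembled in the paper.
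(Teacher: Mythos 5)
Your proposal is correct and follows essentially the same route as the paper's (sketched) proof: first kill off every virus $k\neq i$ by comparison with a single-virus system satisfying $s(B^k-D^k)\le 0$, then view the $p^i$-dynamics as the single-virus SIS on $(B^i,D^i)$ subject to the vanishing perturbation $-\sum_{k\neq i}P^k(t)B^i p^i$. The only difference is that where the paper simply invokes Proposition 5 of \cite{arxiv} to conclude convergence of the perturbed system to $\tilde p^i$, you make the transfer explicit via a cooperative-systems squeeze between the unperturbed SIS and an $\varepsilon$-rescaled SIS with infection matrix $(1-\varepsilon)B^i$ -- a legitimate filling-in of the step the paper leaves as a sketch (with the minor caveat that continuity of the NDFE in $\varepsilon$ is a property of the nonlinear equilibrium equation, not literally of a Perron--Frobenius eigenvector).
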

\noindent Note that this result is an extension of Theorem 3 in \cite{arxiv}, and 
we present a sketch of the proof. 

{\em Sketched proof of Theorem \ref{eglobal}:}
From the proof of Theorem~\ref{0global}, $p^{k}(t)$ will asymptotically converge to $\0$
as $t\rightarrow\infty$ for all initial values $(p^{1}(0),\dots p^m(0)) \in \{(p^1,\dots,p^m )| p^i = \0 \text{ and } p^k \in [0,1]^n \ \forall k \neq i\}$, for $k\neq i$.
From \rep{sys},
$$\dot{p}^{i}(t) = ( B^{i}- D^{i} - P^{i}(t)B^{i}) p^{i}(t) - \sum_{k\neq i} P^{k}(t)B^{k}p^{k}(t).$$
Thus, we can regard the dynamics of $p^{i}(t)$ as an autonomous system
\eq{\dot{p}^{i}(t) = (B^{i}- D^{i} - P^{i}(t)B^{i}) p^{i}(t),\label{temp}}
with a vanishing perturbation $- \sum_{k\neq i} P^{k}(t)B^{i}p^{i}(t)$, which converges to $\0$  as $t\rightarrow\infty$.
From Proposition 5 in \cite{arxiv}, the autonomous system \rep{temp} will asymptotically converge to
a unique epidemic state $(\0, \dots, \0, \tilde p^{i}, \0, \dots, \0)$ for any $(p^{1}(0),\dots,p^{m}(0))\in\D\setminus\{(p^1,\dots,p^m )| p^i = \0 \text{ and } p^k \in [0,1]^n \ \forall k \neq i\}$, with $\D$ defined in \rep{D}. 
\hfill
$\qed$

Another possible NDFE is that of coexisting equilibrium, which is where more than one virus survives. We have the following interesting result similar to Theorem 7 in \cite{arxiv}. 

\begin{theorem}
Consider the model in \rep{eq:sysi} with each virus propagating over the same strongly connected graph $\bbb{G}$ with the corresponding adjacency matrix $A$, and each virus homogeneous in healing and infection rates, that is, for each $k \in [m]$  $\delta^{k}_i=\delta^{k}>0 \ \forall i\in[n]$
and $\beta^{k}_{i}=\beta^{k}>0 \ \forall i\in[n]$.  Suppose that $s(A)>\frac{\delta^{1}}{\beta^{1}} = \cdots = \frac{\delta^{m}}{\beta^{m}}$.
If $(\tilde p^{1},\dots,  \tilde p^{m})$ with $\tilde p^{k}>\0  \ \forall k\in[m]$ is an equilibrium 
of \rep{eq:sysi}, then $\tilde p^{k}\gg \0  \ \forall k\in[m]$ and 
$\tilde p^{i} = \alpha^{ik}\tilde p^{k} \ \forall i,k\in[m]$, for some constant $\alpha^{ik}>0$.
\label{parallel1}\end{theorem}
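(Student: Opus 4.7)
The plan is to rewrite the equilibrium conditions for all $m$ viruses as a single common eigenvector problem on one non-negative irreducible matrix, and then invoke Perron--Frobenius. First, I would use the homogeneity assumption $B^{k}=\beta^{k}A$ to write the equilibrium version of \rep{eq:sysi} component-wise as $(1-\sum_{l=1}^{m}\tilde p^{l}_{i})\beta^{k}(A\tilde p^{k})_{i}=\delta^{k}\tilde p^{k}_{i}$. Setting $q_{i}:=1-\sum_{l=1}^{m}\tilde p^{l}_{i}$, $Q:=\mathrm{diag}(q_{1},\dots,q_{n})$, and $\rho:=\delta^{k}/\beta^{k}$ (the same value for every $k$ by hypothesis), the equilibrium condition collapses to $QA\,\tilde p^{k}=\rho\,\tilde p^{k}$ for every $k\in[m]$. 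Thus every $\tilde p^{k}$ is a non-negative, non-zero eigenvector of the \emph{same} matrix $QA$ for the \emph{same} eigenvalue $\rho$.

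Next I would verify $Q\gg\0$. If $q_{i}=0$ for some $i$, the identity $QA\tilde p^{k}=\rho\tilde p^{k}$ forces $\rho\tilde p^{k}_{i}=0$, hence $\tilde p^{k}_{i}=0$ for every $k$; but then $q_{i}=1-0=1$, a contradiction. Therefore $q_{i}>0$ for every $i$, and since $A$ is irreducible (the underlying graph $\bbb{G}$ is strongly connected), left-multiplication by the positive diagonal $Q$ preserves the zero pattern, so $QA$ is non-negative and irreducible.

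Finally I would invoke Perron--Frobenius: the irreducible non-negative matrix $QA$ has a simple positive Perron eigenvalue $\lambda_{\max}(QA)$ with a strictly positive eigenvector, and this is the \emph{only} eigenvalue of $QA$ admitting a non-negative eigenvector. Each $\tilde p^{k}$ is such an eigenvector with eigenvalue $\rho$, so necessarily $\rho=\lambda_{\max}(QA)$ and every $\tilde p^{k}$ is a strictly positive scalar multiple of the common Perron eigenvector. This yields $\tilde p^{k}\gg\0$ for all $k$, and comparing any two gives $\tilde p^{i}=\alpha^{ik}\tilde p^{k}$ with $\alpha^{ik}>0$.

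The only genuine obstacle is the preliminary step of ruling out $q_{i}=0$; once we have positivity of the diagonal scaling, the whole conclusion reduces to the familiar uniqueness (up to scaling) of the Perron eigenvector of an irreducible non-negative matrix. The hypothesis $s(A)>\rho$ is consistent with the existence of such an equilibrium (since $Q\ll I$ forces $\lambda_{\max}(QA)<s(A)$), but it is not itself used in deriving the structural conclusion.
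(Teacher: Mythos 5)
Your proposal is correct and takes essentially the same route as the paper: both reduce the equilibrium conditions to the statement that every $\tilde p^{k}$ is a non-negative, non-zero eigenvector of the single irreducible matrix $(I-\tilde P^{1}-\cdots-\tilde P^{m})A$ with the common eigenvalue $\delta^{k}/\beta^{k}$, and then conclude strict positivity and pairwise proportionality from Perron--Frobenius uniqueness (the paper outsources that final step to Lemma~2 of the bi-virus paper it cites for irreducible Metzler matrices, whereas you argue it directly for the non-negative matrix $QA$). A minor bonus of your write-up is that you explicitly derive the strict inequality $q_i>0$ by contradiction from the equilibrium equation, while the paper asserts $[\tilde P^{1}+\cdots+\tilde P^{m}]_{ii}<1$ citing only its Lemma~\ref{box}, which by itself yields only $\le 1$.
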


{\em Proof:} 
The homogeneity assumption on the infection rates allows us to factor $B^k = \beta^k A$, for each virus $k\in [m]$. To be an equilibrium of \rep{eq:sysi} the following must hold for all $k \in [m]$ 
\begin{align}\label{equil}
\begin{split}
(I-\tilde P^{1}-\cdots -\tilde P^{m})A\tilde p^{k} = \frac{\delta^{k}}{\beta^{k}}\tilde p^{k},
\end{split}
\end{align}
in which $(I-\tilde P^{1}-\cdots -\tilde P^{m})A$ is an irreducible Metzler matrix\footnote{
A matrix is Metzler if all of the off-diagonal components are non-negative.
}, since $\tilde P^{1}+\cdots +\tilde P^{m}$ is diagonal and $[\tilde P^{1}+\cdots +\tilde P^{m}]_{ii}<1$ for all $i\in [n]$, by Lemma \ref{box}. 
From Lemma 2 in \cite{arxiv}, it must be true that $\tilde p^{k}\gg \0  \ \forall k\in[m]$ and 
$\tilde p^{i} = \alpha^{ik}\tilde p^{k} \ \forall i,k\in[m]$, for some constant $\alpha^{ik}>0$.
\hfill
$\qed$

While the stability of the time--varying case has been explored in \cite{pare2017epidemic}, the time--varying NDFE or epidemic limit cycle 
is an open problem, even for the single virus case. Some work has been done to show the existence of a periodic NDFE for a single virus switching system in \cite{rami2014switch}.


\section{Antidote Control Formulation} \label{sec:control}


Let us assume that for each agent, in addition to the healing rate, there is a control input  $u_i(t)$ that acts as an additive boost to the healing rate. This implies that the controller can increase the agents' ability to recover from the virus, which can be thought of as the  administration of an antidote or some other type of treatment. This effect is portrayed in the model as 
\begin{equation*}
    \dot{p}^{k}_i(t) = (1 - p^{1}_i(t) - \dots - p^{m}_i(t))\sum_{j=1}^n \beta^{k}_{ij}  p^{k}_j(t) - (\delta^{k}_i + u_i^k(t))p^{k}_i(t).
\end{equation*}
We define $U(t) = \text{diag}(u)$ with $u = [u_1(t),\dots,u_n(t)]^T$. 
To simplify the discussion in this section, we assume that $B^{k}(t)$ is symmetric,  piecewise continuous in $t$, and bounded $\forall t \geq 0$. Similar to the approaches in \cite{wan2007network,wan2008designing,vijayshankar2012cost,PreciadoTCNS14,bullo2014control}, we focus on minimizing the maximum eigenvalue of $B^{k}(t)-(D^{k} +U^k(t))$. 
Even though these control techniques are generally effective, we believe the approaches herein are more general and simpler, and therefore more scalable. Also, the assumption that our control input is additive to the base healing rate is novel and more sensible for the main motivating example, that is, every agent should have some inherent healing rate that should not be affected by the controller.

While the solutions to the following posed problems may not meet the conditions of Theorems \ref{thm:1} and \ref{thm:1t}, that is, they may not result in the maximum eigenvalues being less than zero, they push the system towards those conditions, consistent with the principle of the average being less than zero, presented in Theorem \ref{thm:nonH}. And in practice, illustrated by simulation in the next section, these techniques reduce the spread of the epidemics. 
Under the aforementioned assumptions we can formulate the following optimization problem for each virus $k$, appealing to Theorems \ref{thm:1} and \ref{thm:1t} depending on whether $B^{k}$ is constant or time dependent:
\begin{equation*}
\begin{aligned}
& \underset{u^{k}_i(t)}{\text{minimize}}
& & \lambda_1(B^{k}(t)-(D^{k} +U^k(t)))\\
& \text{subject to}
 & &\sum_{i=1}^n u^{k}_i(t)\leq c^{k}, \  \; t\geq0,\\ 
 & & & U^k(t) = \text{diag}(u^{k}_1(t), \dots , u^{k}_n(t)), \\ 
 & & & u^{k}_i(t)\geq 0, \  \; i = 1, \ldots, n, \ t\geq0.\\
\end{aligned}
\end{equation*}
\noindent given that $B^{k}(t)$ is symmetric for all $t\geq 0$. 

From the Gershgorin Disc Theorem \cite{horn2012matrix} it is clear that by sufficiently increasing the $u^{k}_i$'s, 
the conditions of Theorems  \ref{thm:1} and \ref{thm:1t} will be satisfied. Therefore we can relax the above optimization problem to obtain the following:
\begin{problem}
\begin{equation*}
\begin{aligned}
& \underset{\eta^{k},u^{k}_i(t)}{\text{minimize}}
& & \eta^{k} \\
& \text{subject to}& &\eta^{k} \geq  \sum_{j = 1}^n \beta^{k}_{ij}(t) - (\delta^{k}_i + u_i^k(t)), \\
& & &\sum_{i=1}^n u^{k}_i(t)\leq c^{k}, \\
& & & u^{k}_i(t)\geq 0, \  \; i = 1, \ldots, n, \ t\geq0.\\
\end{aligned}
\end{equation*}\label{prob:1}
\end{problem}
\noindent This is clearly a linear program and can  easily be solved.

To make this a more compelling and realistic problem, we can impose a constraint on the number of agents that can be affected, which is a reasonable assumption because 
the cost of providing a low-dose treatment to all agents is higher than providing that same treatment dose to a few select members of the population (such as the sickest or most susceptible agents).  
Define the sparsity metric $\| \cdot \|_0$ as the number of the non-zero entries in its argument. 

Employing the sparsity metric, we have the following problem, 
with a capacity constraint and a sparsity constraint:
\begin{equation*}
\begin{aligned}
& \underset{\eta^{k},u^{k}_i(t)}{\text{minimize}}
& & \eta^{k} \\
& \text{subject to}& &\eta^{k} \geq \sum_{j = 1}^n \beta^{k}_{ij}(t) - (d^{k}_i+ u_i^k(t)), \\
& & &\sum_{i=1}^n u^{k}_i(t)\leq c^{k}, \\
& & &\| u^{k}(t)\|_0 \leq d^{k}, \\
& & & u^{k}_i(t)\geq 0, \  \; i = 1, \ldots, n, \ t\geq0,
\end{aligned}
\end{equation*}
\noindent where $d^{k}$ is the maximum number of agents that can be treated for virus $k$. 
At first glance, the second and third  constraints may seem redundant; however, the $\ell_1$ constraint limits the total amount of antidote that can be used while the sparsity constraint limits the number of agents that can be treated. The inclusion of the $\ell_1$ constraint prevents an infinite amount of antidote being administered to the limited number of agents allowed by the sparsity constraint.

It is well known that $\| \cdot \|_0$ is highly non-convex \cite{wright2009robust}, making the above problem difficult to solve. 
Therefore, to solve it we employ another relaxation using the reweighted $\ell_1$ norm \cite{candes2008enhancing}.
\begin{definition}
The weighted $\ell_1$ norm is 
\begin{equation}\label{eq:wl1}
\|x^k \|_{\hat{\ell}_1} : = \sum_{i=1}^n w_i^k |x^k_i|,
\end{equation}
where $w_i$'s are positive and can be a constant or depend on time. 
\end{definition}
\noindent In view of this,  we can rewrite the above problem  as the following:
\begin{problem}
\begin{equation*}
\begin{aligned}
& \underset{\eta^{k},u^{k}_i(t)}{\text{minimize}}
& & \eta^{k} + \kappa \| u^{k}(t)\|_{\hat{\ell}_1} \\
& \text{subject to}& &\eta^{k} \geq  \sum_{j = 1}^n \beta^{k}_{ij}(t) - (\delta^{k}_i + u_i^k(t)), \\
& & &\sum_{i=1}^n u^{k}_i(t)\leq c^{k}, \\
& & & u^{k}_i(t)\geq 0, \  \; i = 1, \ldots, n, \ t\geq0,
\end{aligned}
\end{equation*}\label{prob:2}
\end{problem}
\noindent where $\kappa$ is a constant weighting factor. 

An effective heuristic for the selection of the weights $w_i^k$'s in \eqref{eq:wl1}, proposed in \cite{candes2008enhancing}, is, for some small $\epsilon >0$, 
\begin{equation}
w_i^{k+1} = \frac{1}{|x^k_i|+ \epsilon}.
\end{equation}
For completeness, we include Algorithm \ref{alg:1}, which explains the implementation of this heuristic to solve Problem \ref{prob:2}. The notation Problem \ref{prob:2}$(w_{k-1})$ indicates that $w_{k-1}$ is used for the weighted $\ell_1$ norm in the objective function of Problem \ref{prob:2} in the $k$th iteration. Employing this heuristic yields a good solution to Problem \ref{prob:2} but clearly  is expensive, since it requires the calculation of multiple solutions. The effectiveness of this approach is illustrated in the following section via simulation.

\begin{algorithm}
   $w^0 = vec(\frac{1}{n}, \dots, \frac{1}{n})$\;
   $k=1$\;
   \While{$\|u_k-u_{k-1}\| > \varepsilon$}{
      $u_{k} = \text{arg}\min$ Problem \ref{prob:2}$(w_{k-1})$\;
      $w_i^{k} = \frac{1}{|u^k_i|+ \epsilon}$\;
      $k = k+1$\;
      }
    \caption{Algorithm for solving Problem \ref{prob:2}}\label{alg:1}
 \end{algorithm}



\section{Simulations}\label{sec:sim}

\begin{figure}
    \centering
    \subfloat[The system at time zero.\label{fig:thm3_0}]{
      \includegraphics[width=.49\columnwidth]{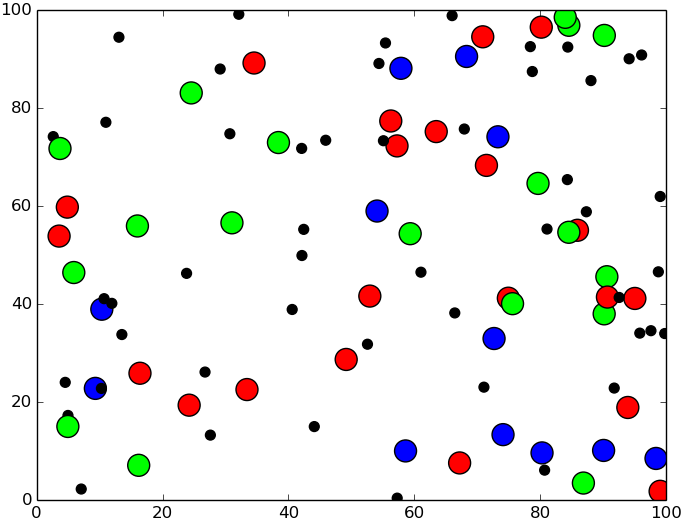}}
    \hfill
    \subfloat[The system at time 1000.\label{fig:thm3_400}]{
      \includegraphics[width=.49\columnwidth]{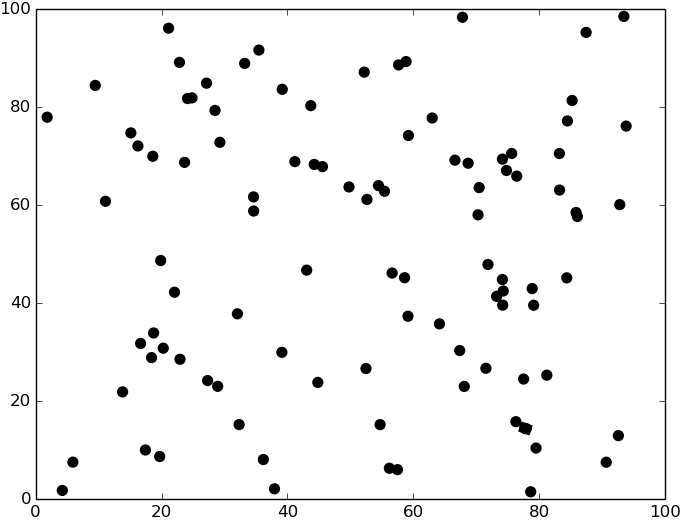}}
    \caption{This three-virus system meets the assumptions of Theorem \ref{eglobal} so virus 1 reaches an NDFE while the other two viruses die out.  
    The colors and diameters follow \eqref{eq:color} and \eqref{eq:diam} and the graph structure follows \eqref{eq:at}-\eqref{eq:phipiece}. 
A video of this simulation can be found at  \href{https://youtu.be/j_MHm08dA_o}{youtu.be/j\_MHm08dA\_o}.}
\label{fig:thm3}
\end{figure}

In this section we present a set of illuminating simulations of various competing virus models over static and time--varying graph structure networks. 
Due to limit of dimensions in color and size, for the simulations we will only have three competing viruses. 
Virus 1 is depicted by the color red ($r$), virus 2 is depicted by the color blue ($b$), and virus 3 is depicted by the color green ($g$). For all $i\in[n]$, the color at each time $t$ for agent $i$ is given by 
\begin{equation}\label{eq:color}
\frac{p^{1}_i(t)}{\sum_{k=1}^3 p^{k}_i(t)}r + \frac{p^{2}_i(t)}{\sum_{k=1}^3 p^{k}_i(t)}b + \frac{p^{3}_i(t)}{\sum_{k=1}^3 p^{k}_i(t)}g.
\end{equation}
When $p^{1}_i(t)+p^{2}_i(t)+p^{3}_i(t)=0$, the color goes to black, indicating completely healthy, susceptible.
These are used to facilitate the depiction of the parallel equilibrium ($\tilde p^{1} = \alpha^2\tilde p^{2} = \alpha^3\tilde p^{3}$), which will be shown by all nodes converging to the same color.
For all $i\in[n]$, the diameter of the node representing agent $i$ is given by 
\begin{equation}\label{eq:diam}
d_0 + (p^{1}_i(t)+p^{2}_i(t)+p^{3}_i(t))r_0,
\end{equation}
with $d_0$ being the default/smallest diameter and $r_0$ being the scaling factor depending on the total sickness of agent $i$. 
Therefore the color indicates the sickness each agent has and the diameter indicates 
how sick each agent is. 

For systems that have three different subgraphs, viruses 1, 2, and 3 spread on the graphs depicted by gray, green, and pink  edges, respectively. 
If all viruses spread on the same graph, the edges are gray.

\begin{figure}
    \centering
    \includegraphics[width = .49\columnwidth]{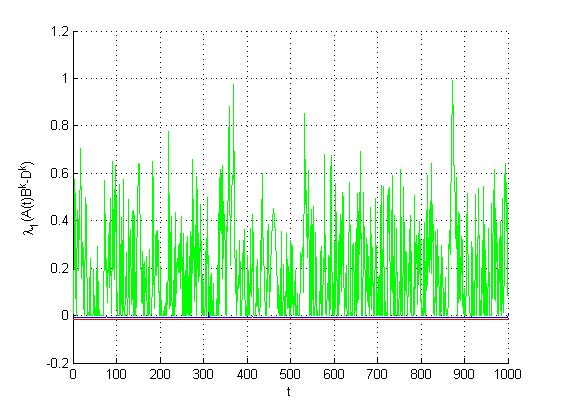}
    \caption{The maximum eigenvalues of the three viruses from the simulation in Figure \ref{fig:thm3}.}\label{fig:eig}
\end{figure}
The simulation in Figure \ref{fig:thm3} has three viruses spreading over the same time--varying graph. Similar to \cite{pare2017epidemic}, the graph structure is determined by 
\begin{equation}\label{eq:at}
\beta_{ij}(t) = \begin{cases}
    \beta e^{-\|z_i(t) - z_j(t)\|^2}, & \text{if } \|z_i(t) - z_j(t)\| < \hat{r}, \\
    0,              & \text{otherwise},
\end{cases}
\end{equation}
where $z_i(t)\in \mathbb{R}^2$ is the position of agent $i$, with $\hat{r}=10$. The agents have piece-wise constant drifts, that is,
\begin{equation}\label{eq:phi}
\dot{z}(t) = \phi(t),
\end{equation}
where $\phi(t)\in \mathbb{R}^2$ and is determined, for each dimension $l\in [2]$, by
\begin{equation}\label{eq:phipiece}
\phi_l =\begin{cases}
    -\phi_l, & \text{if } z_l=z_{c_l}+\gamma/2 \text{ or } z_l=z_{c_l}-\gamma /2 \\
   \ \ \phi_l ,             & \text{otherwise},
\end{cases}
\end{equation}
where the agents hover around a square, centered at some point $z_c$. The initial positions and $\phi$'s are chosen randomly. Each virus is homogeneous in infection rate. 
The first two viruses meet the assumptions of Theorem \ref{thm:1t}, while the maximum eigenvalue of the third virus fluctuates between  being positive and negative. See Figure \ref{fig:eig} for a plot of the maximum eigenvalues of the three-virus dynamics.  Consistent with the theorem, the first two viruses are eradicated quite quickly. The third virus is also eliminated, but it takes a little longer. This eradication 
is illustrated in Figure \ref{fig:thm3_400}. 


The simulation in Figure \ref{fig:dom} meets the assumptions of Theorem \ref{eglobal}, where $s(B^{1}-D^{1})> 0$, and $s(B^{2}-D^{2})< 0$ and $s(B^{3}-D^{3})< 0$. Therefore the first virus, depicted in red, reaches an epidemic equilibrium, while the other two viruses are eradicated.
\begin{figure}
    \centering
    \subfloat[The system at time zero.\label{fig:dom0}] {
      \includegraphics[width=.485\columnwidth]{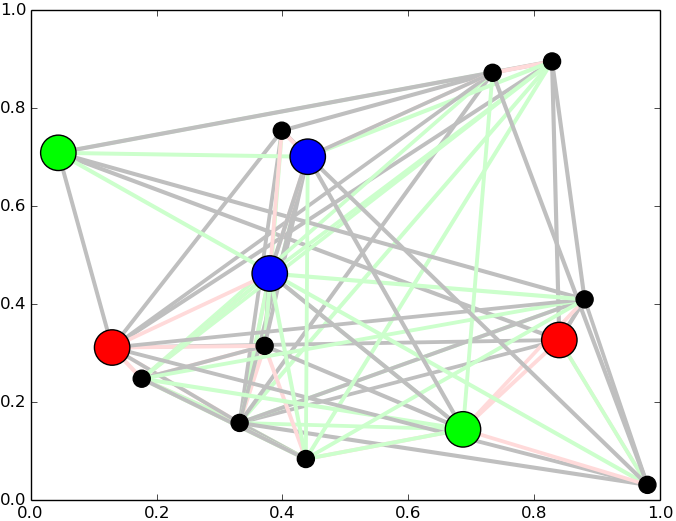}}
    \hfill
    \subfloat[The system at time 300.\label{fig:dom400}] {
      \includegraphics[width=.485\columnwidth]{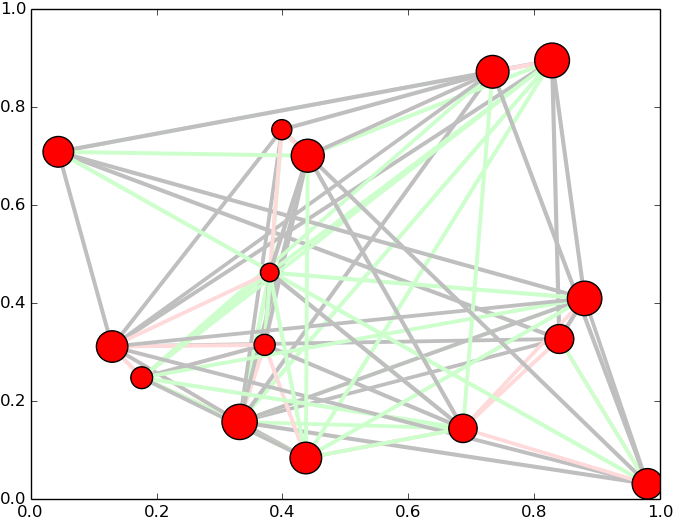}}
    \caption{This three-virus system meets the assumptions of Theorem \ref{eglobal} so virus 1 reaches an NDFE while the other viruses are eradicated.  
    The colors and diameters follow \eqref{eq:color} and \eqref{eq:diam}. 
A video of this simulation can be found at  \href{https://youtu.be/zCRiLr8sWEM}{youtu.be/zCRiLr8sWEM}.}
\label{fig:dom}
\end{figure}

The simulation shown in Figure \ref{fig:par} meets the assumptions of Theorem \ref{parallel1}, that is, the three viruses are each homogeneous, with $\frac{\delta^1}{\beta^1}=\frac{\delta^2}{\beta^2}=\frac{\delta^3}{\beta^3}$, and propagate over the same graph structure. There are $15$ agents and the initial conditions are given in Figure \ref{fig:par0}. Consistent with the theorem, the system converges to a co-existing parallel equilibria.

\begin{figure}
    \centering
    \subfloat[The system at time zero.\label{fig:par0}] {
      \includegraphics[width=.485\columnwidth]{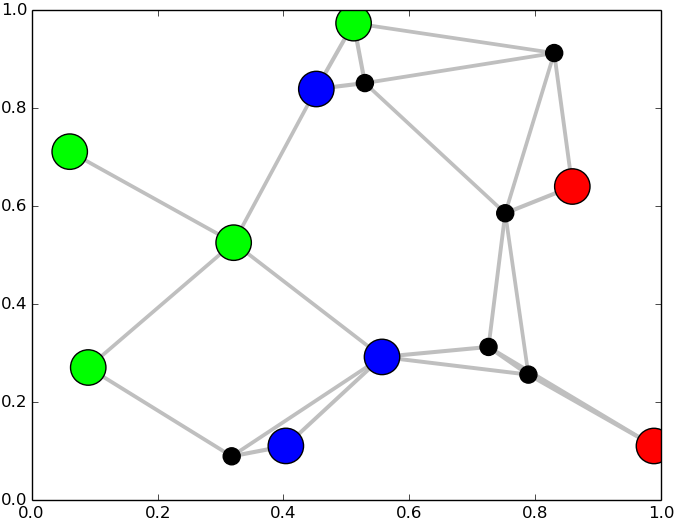}}
    \hfill
    \subfloat[The system at time 150.\label{fig:par400}] {
      \includegraphics[width=.485\columnwidth]{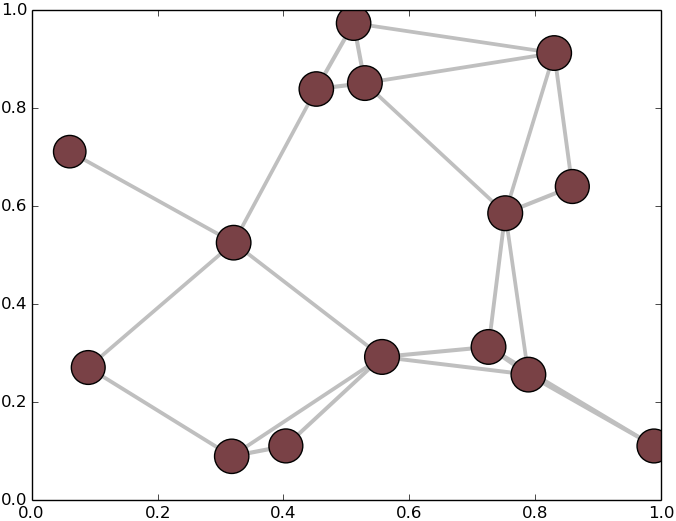}}
    \caption{This three-virus system meets the assumptions of Theorem \ref{parallel1} and the viruses converge to a parallel equilibrium.  
    The colors and diameters follow \eqref{eq:color} and \eqref{eq:diam}. 
A video of this simulation can be found at  \href{https://youtu.be/sy_RoUP7qUs}{youtu.be/sy\_RoUP7qUs}.}
\label{fig:par}
\end{figure}

We conclude with a simulation that implements the control techniques presented in Section \ref{sec:control}. Consider the single virus system in Figure \ref{fig:con}. This system is homogeneous in infection rate, with $\beta = 0.492$. 
We compare the system with no controller (on the left), a controller using Problem \ref{prob:1} (in the middle), and a controller that uses Algorithm \ref{alg:1} to solve Problem \ref{prob:2} iteratively with $\kappa=.05$ (on the right). The sum of the final probabilities of infection for all agents ($\sum_{i=1}^n p_i(100)$) for the three plots are 10.7, 4.92, and 3.6, respectively.  
Therefore, Algorithm \ref{alg:1} performed the best, however both had significant improvements over the uncontrolled simulation. The maximum eigenvalues of the three linearized systems are, from left to right, 1.893, 0.557, and 0.421; so none of the linearized systems are Hurwitz. Therefore, consistent with Theorem \ref{thm3}, the systems are all at NDFE. However, even though the control efforts do not completely eradicate the virus, they do mitigate its effect.

\begin{figure}
    \centering
    \subfloat[The system at time zero.\label{fig:nocon}] {
      \includegraphics[width=\columnwidth]{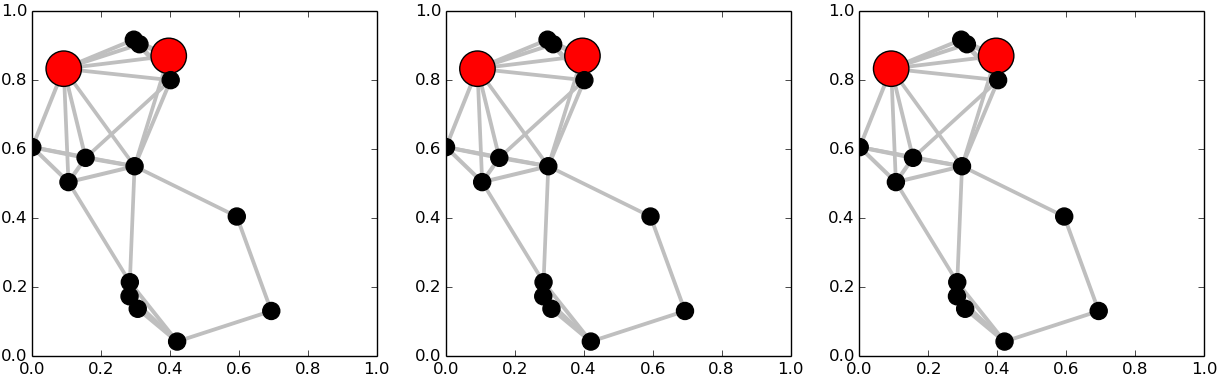}}
    \hfill
    \subfloat[Final state with no control, Problem \ref{prob:1}, and implementing Algorithm \ref{alg:1} on Problem \ref{prob:2}.\label{fig:conS}] {
      \includegraphics[width=\columnwidth]{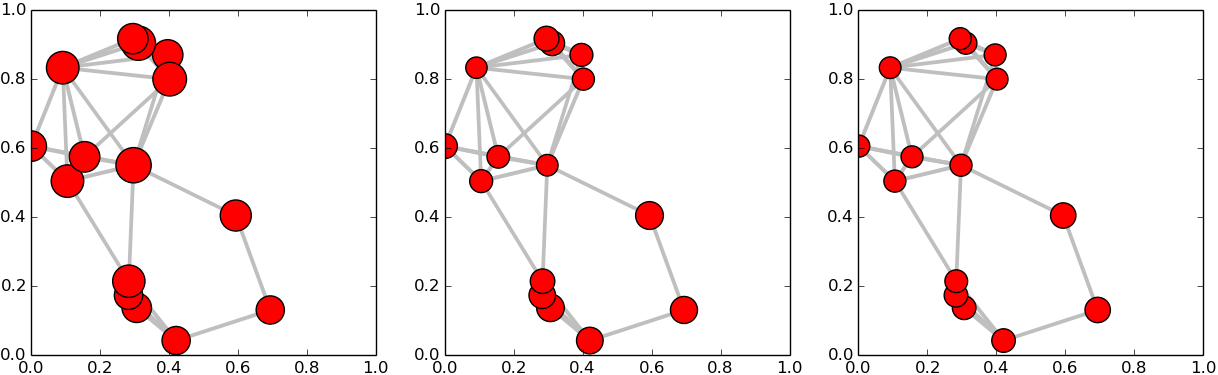}}
    \caption{This is a single virus epidemic equilibrium comparing control techniques. A video of this simulation can be found at  \href{https://youtu.be/P0k5VYxUFJ0}{youtu.be/P0k5VYxUFJ0}.
    }
\label{fig:con}
\end{figure}

\section{Conclusion}\label{sec:con}

We have explored the competing multi-virus SIS model with several theorems exploring stability of the equilibria of the model for the static and time--varying graph cases. We have also proposed several control techniques that appeal to Theorems \ref{thm:1}, \ref{thm:1t}, and \ref{thm:nonH}, providing two efficient centralized antidote distribution/allocation protocols.

In future work we would like to explore more generic cases of co-existing epidemic states. Further, we would like to compare the techniques in Section \ref{sec:control} to other existing techniques.  We would also like to implement the control techniques on large scale systems with at least tens of thousands of nodes.

\bibliographystyle{IEEEtran}
\bibliography{IEEEabrv,bib}



\end{document}